\definecolor{gray}{rgb}{0.25, 0.25, 0.25}
\newtheorem{theorem}{Theorem}[section]
\newtheorem{lemma}[theorem]{Lemma}
\theoremstyle{definition}
\theoremstyle{plain}
\newtheorem{claim}[theorem]{Claim}
\newtheorem{prob}[theorem]{Problem}
\newtheorem{prop}[theorem]{Proposition}
\theoremstyle{definition}
\theoremstyle{definition}
\theoremstyle{definition}
\theoremstyle{definition}
\newtheorem{defn}[theorem]{Definition}
\theoremstyle{definition}
\theoremstyle{definition}
\theoremstyle{definition}
\newenvironment{poc}{\begin{proof}[Proof of claim]}{\end{proof}}
\newcommand{\ep}{\varepsilon}
\newcommand{\eps}{\varepsilon}
\newcommand{\cP}{\mathcal{P}}
\renewcommand{\P}{\mathbb{P}}
\newcommand{\inte}{\mathfrak{int}}
\newcommand{\bd}{\mathfrak{bd}}
\newcommand{\tk}{\mathsf{TK}}
\newcommand{\bN}{\ensuremath{\mathbb{N}}}
\newcommand{\bP}{\ensuremath{\mathbb{P}}}
\newcommand{\bE}{\ensuremath{\mathbb{E}}}
\newcommand{\makenote}[2]
{

\smallskip

\noindent

\fbox{
\begin{minipage}{0.95\textwidth}

\def\temp{#1}
\ifx\temp\empty
\def\forlabel{$\bullet$}
\else
\def\forlabel{{\bfseries #1:}}
\fi

\begin{itemize}[label = \forlabel]
  #2
\end{itemize}
\end{minipage}
}

\smallskip
}
\definecolor{darkslateblue}{rgb}{0.28, 0.24, 0.55}
\definecolor{celestialblue}{rgb}{0.29, 0.59, 0.82}
\definecolor{cadmiumgreen}{rgb}{0.0, 0.42, 0.24}
\definecolor{darkpastelgreen}{rgb}{0.01, 0.75, 0.24}
\definecolor{deepcarmine}{rgb}{0.66, 0.13, 0.24}
\title{Disjoint isomorphic balanced clique subdivisions}
\author{
Irene Gil Fern\'andez
	\thanks{Mathematics Institute and DIMAP, University of Warwick, Coventry, CV4 7AL, UK. Email: {\texttt \{irene.gil-fernandez, Joseph.Hyde, O.Pikhurko, Zhuo.Wu\}@warwick.ac.uk}. J.H. was supported
by the UK Research and Innovation Future Leaders Fellowship MR/S016325/1. O.P. was supported by ERC Advanced Grant 101020255 and Leverhulme Research Project Grant RPG-2018-424.}
\and 
Joseph Hyde\footnotemark[1]
\and 
Hong Liu
\thanks{Extremal Combinatorics and Probability Group (ECOPRO), Institute for Basic Science (IBS), Daejeon, South Korea, Email: {\texttt hongliu@ibs.re.kr}. H.L. was supported by the Institute for Basic Science (IBS-R029-C4) and the UK Research and Innovation Future Leaders Fellowship MR/S016325/1.}
\and 
Oleg Pikhurko\footnotemark[1]
\and 
Zhuo Wu\footnotemark[1]
}
\begin{document}
\maketitle

%%%%%%%%%%%%%%%%%%%%%%%%%%%%%%%%%%%%%%%%%%%%%%%%
%%%%%%%%%%%%%%%%%%%%%%%%%%%%%%%%%%%%%%%%%%%%%%%%
%%%%%%%%%%%%%%%%%%%%%%%%%%%%%%%%%%%%%%%%%%%%%%%%
%%%%%%%%%%%%%%%%%%%%%%%%%%%%%%%%%%%%%%%%%%%%%%%%
%%%%%%%%%%%%%%%%%%%%%%%%%%%%%%%%%%%%%%%%%%%%%%%%

\begin{abstract}
    A thoroughly studied problem in Extremal Graph Theory is to find the best possible density condition in a host graph $G$ for guaranteeing the presence of a particular subgraph $H$ in $G$. One such classical result, due to Bollob\'{a}s and Thomason, and independently Koml\'{o}s and Szemer\'{e}di, states that average degree $O(k^2)$ guarantees the existence of a $K_k$-subdivision. We study two directions extending this result. 
    \begin{itemize}
        \item Verstra\"ete conjectured that the quadratic bound $O(k^2)$ would guarantee already two vertex-disjoint isomorphic copies of a $K_k$-subdivision.
        \item Thomassen conjectured that for each $k \in \mathbb{N}$ there is some $d = d(k)$ such that every graph with average degree at least $d$ contains a balanced subdivision of $K_k$, that is, a copy of $K_k$ where the edges are replaced by paths of equal length.  Recently, Liu and Montgomery confirmed Thomassen's conjecture, but the optimal bound on $d(k)$ remains open. 
    \end{itemize}
In this paper, we show that the quadratic bound $O(k^2)$ suffices to force a balanced $K_k$-subdivision. This gives the optimal bound on $d(k)$ needed in Thomassen's conjecture and implies the existence of $O(1)$ many vertex-disjoint isomorphic $K_k$-subdivisions, confirming Verstra\"ete's conjecture in a strong sense.
\end{abstract}

\section{Introduction}\label{sec-intro}

% A thoroughly studied problem in Extremal Graph Theory is to find the best possible density condition in a host graph $G$ that guarantees the presence of a particular subgraph $H$ in $G$. 
A \emph{subdivision} of a graph $H$ is obtained by replacing each edge of $H$ by a path, so that the new paths are internally vertex disjoint. This notion has played a central role in topological graph theory since the seminal result of Kuratowski in 1930 that a graph is planar if and only if it does not contain a subdivision of $K_5$ or $K_{3,3}$. In this paper, we are specifically interested in the optimal average degree for forcing particular subdivisions of a clique.    %the complete graph with five vertices or a subdivision of the complete bipartite graph with three vertices on each side.

 One of the first results in this direction was proved by Mader~\cite{Mader67}, who showed that there is some $d=d(k)$ such that every graph with average degree at least $d$ contains a subdivision of the complete graph~$K_k$. After some further results by Mader~\cite{Mader72}, it was proved by Bollob\'{a}s and Thomason~\cite{boltom}, and independently by Koml\'{o}s and Szemer\'{e}di~\cite{kstop1}, that we may take $d(k)=O(k^2)$. This is optimal: e.g., the complete balanced bipartite graph on $k^2/4$ vertices contains no subdivision of $K_\ell$ for $\ell\ge k$; indeed such a subdivision would require at least $\ell + \binom{\ell}{2} - (\frac{\ell}{2})^2 > \frac{k^2}{4}$ vertices, as at most $(\frac{\ell}{2})^2$ pairs of vertices can be embedded as adjacent pairs and each non-adjacent pair would require at least one additional vertex.

Twenty years ago, a strengthening of the result of Bollob\'{a}s-Thomason and Koml\'{o}s-Szemer\'{e}di was conjectured by Verstra\"ete~\cite{v}, who believed that the quadratic bound $O(k^2)$ suffices also to guarantee a pair of disjoint isomorphic subdivisions of $K_k$. This can be seen as a natural generalisation of the problem of finding disjoint cycles of the same length in a given graph. Such problem has received considerable attention since the work of Corradi and Hajnal~\cite{CH63}, who showed that for any positive integer $k$, any graph of order at least $3k$ and minimum degree at least $2k$ contains $k$ disjoint cycles. 
%Egawa~\cite{Ega96} proved that, in the above result, if in fact $|G|\geq 17k+o(k)$ then the cycles can be chosen to have the same length. Other variations are possible, for example Verstra\"ete~\cite{v} showed that for any positive integer~$k$, any bipartite graph $G$ with average degree at least $4k$ contains $k$ cycles with consecutive even lengths, and moreover, if $|G|\geq (16k^2)!$ and $d(G)=C k^2$, the same conclusion is true with the additional property that the cycles are disjoint. 

A different direction of extension was proposed by Thomassen~\cite{tsub} (see also \cite{tprob, tconf}), who conjectured that, for each $k\in \bN$, there is some $d=d(k)$ such that every graph with average degree at least $d$ contains a balanced subdivision of $K_k$. Here, a subdivision is \emph{balanced} if every added path is of the same length. In 2020, Liu and Montgomery~\cite{lmerdoshajnal} confirmed this conjecture, but it remains to determine optimal bounds for $d(k)$.
Clearly, we have by $d(k) = \Omega(k^2)$ by the same complete bipartite graph example above. Very recently, Wang~\cite{yw} proved that there exists $d$ such that every $n$-vertex graph with average degree at least $d$ contains a balanced subdivision of $K_r$, where $r=\Omega(d^{1/2}/{\log^{10}n})$.

%%%%%%%%%%%%%%%%%%%%%%%%

Our main result simultaneously settles the conjecture of Verstra\"ete and gives optimal bounds for $d(k)$ in Thomassen's conjecture.
It shows that the quadratic bound is optimal in forcing balanced disjoint isomorphic clique subdivisions. Indeed, simply notice that a balanced $K_{tk}$-subdivision contains $t$ disjoint isomorphic balanced $K_k$-subdivisions. 

\begin{theorem}\label{thm-main}
Every graph with average degree $d$ contains a balanced subdivision of a complete graph of order $\Omega(\sqrt{d})$.
\end{theorem}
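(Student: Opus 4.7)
The overall strategy is to follow the sublinear expander paradigm of Komlós and Szemerédi, as developed by Liu and Montgomery for the qualitative version of Thomassen's conjecture, and to sharpen it so as to eliminate the polylogarithmic loss present in Wang's recent bound. The first step is to pass from the given graph of average degree $d$ to a sublinear expander subgraph $H$ with minimum degree $\Omega(d)$; in such an $H$, standard ball-expansion arguments provide short paths of polylogarithmic length between any pair of vertices, even after moderately many vertex deletions. Applied naively, this recovers the classical Bollob\'as--Thomason / Koml\'os--Szemer\'edi $K_k$-subdivision with $k = \Omega(\sqrt{d})$, but the $\binom{k}{2}$ connecting paths have wildly different lengths. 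The task is to force them all to have the same length $m$ while keeping $k = \Omega(\sqrt{d})$.

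The plan is then to select a set $B$ of $k = c\sqrt{d}$ branch vertices whose local expansion balls are essentially disjoint, fix a single target length $m$ of polylogarithmic size, and embed the $\binom{k}{2}$ balanced connecting paths sequentially using a length-adjusting gadget. A natural candidate is an \emph{adjuster}: a small attached structure such as a short even cycle or a theta-graph grafted onto a path, allowing its length to be shifted by any integer in a window $[0,w]$ without changing its endpoints. For each pair of branch vertices I would first locate, via expansion, a short connecting path together with an adjuster, tune the adjuster so that the total length is exactly $m$, remove the used vertices from $H$, and verify that enough of the sublinear expansion remains to handle the next pair. The branch-vertex selection should be robust enough that each branch vertex retains a private expanding neighbourhood throughout.

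The main obstacle is keeping the branch set size at $\Omega(\sqrt{d})$ rather than $\Omega(\sqrt{d}/\mathrm{polylog}\,n)$. Each connecting path already uses polylogarithmically many vertices, so with $\binom{k}{2}$ paths the total consumption scales like $k^2 \cdot \mathrm{polylog}\,n$; any further polylog overhead per pair --- for example, reserving extra space for adjusters, or refreshing the expansion ball around a branch vertex after previous deletions --- forces $k$ to shrink by a polylog factor, which is exactly the bottleneck in Wang's argument. Overcoming this likely requires producing adjusters for free inside the expansion ball already used to connect the pair (so that adjusters do not charge any additional polylog cost), or else a global absorption/reservation trick that fixes the lengths of all $\binom{k}{2}$ paths in one pass rather than one pair at a time. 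Executing either scheme while maintaining the sublinear expansion of $H$ under sequential deletions is where I expect the technical heart of the proof to lie; the rest should be a careful but essentially routine sublinear-expander embedding argument.
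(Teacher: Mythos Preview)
Your high-level plan---pass to a sublinear expander, build adjusters, connect branch vertices by paths of prescribed length---matches the paper's architecture, and you have correctly isolated the bottleneck: eliminating the polylog loss in Wang's bound. However, your proposed remedies (adjusters obtained ``for free'' inside connection balls, or a global absorption scheme fixing all $\binom{k}{2}$ lengths at once) are speculative, and neither is what actually resolves the issue here.

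The missing idea is a case split on the relative size of $d$ and $n$. When $d \ge \log^{800} n$, every polylog-in-$n$ factor arising in the expander machinery is at most $d^{o(1)}$ and hence harmless against $\sqrt{d}$; in this regime the adjuster argument runs essentially as you describe and in fact yields a balanced $K_{\sqrt{d}\cdot m}$-subdivision with $m=\lfloor\log^4(n/d)\rfloor$, stronger than needed. The sparse regime $d < \log^{800} n$ is handled as a black box by Wang's result (Theorem~\ref{thm-yanwang}), which already gives a balanced $K_{\Omega(d)}$-subdivision---linear in $d$, far exceeding $\sqrt{d}$. A very dense regime $d \ge n/K$ is dispatched by the Alon--Krivelevich--Sudakov $1$-subdivision (Theorem~\ref{thm-aks}). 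So the polylog obstacle is not overcome \emph{inside} the expander argument; it is sidestepped by noting that whenever polylog factors could bite, Wang's linear bound already suffices. One further technical point you underplay: the paper does not work with raw branch vertices and their balls, but with \emph{units}---rooted trees with $\Theta(d\cdot m^{28})$ leaves at uniform depth and interiors of size only $O(\sqrt{d}\cdot m^{7})$---which serve as robust high-degree proxies for the core vertices and retain large usable boundaries throughout the sequential path embeddings.
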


Our approach uses a version of expander called sublinear expanders. For more recent applications of the theory of sublinear expanders, we refer the interested readers to~\cite{GFLiu22,GFKimKimLiu22,HKL20,CruxCycle,HHKL21,KLShS17,lmmader,lmerdoshajnal,LWY20}.

\paragraph{Organisation.} 
The rest of the paper is organised as follows.
Preliminaries are given in Section~\ref{sec-def}.
In Section~\ref{sec-def-komsze} we reduce to proving Theorem~\ref{thm-main} in the case when $G$ is a bipartite expander graph. 
Then in Section~\ref{sec-def-reduction}, we further reduce to proving Theorem~\ref{thm-main} when $n/K \geq d \geq \log^{800}(n)$ for some large constant~$K$ (stated as Theorem~\ref{main-theorem}). 
Section \ref{sec-proof-ideas} contains a proof sketch of Theorem~\ref{main-theorem}.
In Section \ref{sec-build-units}, we demonstrate the existence of a certain structure, called a \emph{unit}, in an expander graph. % Having these structures will be a crucial for building our absorbing structures (called `adjusters') later.
In Section \ref{sec-build-adj}, we use these units to build a certain absorbing structure introduced by Liu and Montgomery~\cite{lmerdoshajnal} called an \emph{adjuster}, which will allow us to `adjust' the length of a path between two points under certain conditions.
In Section \ref{sec-link-adj}, we will use these adjusters to prove Theorem~\ref{main-theorem}. Section~\ref{sec-conclude} contains concluding remarks.
 
\section{Definitions and some auxiliary results}\label{sec-def}

\paragraph{Notation.}
For sets $X$ and $Y$, we define $X \times Y := \{(x,y): x \in X, y \in Y\}$.
We often omit brackets when writing small sets, for example, abbreviating $\{x\}$ and $\{x,y\}$ to $x$ and $xy$, respectively. For $\ell \in \mathbb{N}$, we define $[\ell] := \{1, 2, \ldots, \ell\}$. We omit floor and ceiling signs when they are not essential, that is, we treat large numbers as integers. We sometimes write $(a,b,c,d)_{X} = (a',b',c',d')$ meaning that we choose constants $a,b,c,d$ in the statement of Result $X$ to be $a',b',c',d'$.

Let $G$ be a graph. We will denote the set of vertices of $G$ by $V(G)$ and the set of edges by $E(G)$, and define $|G| := |V(G)|$ and $e(G):=|E(G)|$.
Let $A \subseteq V(G)$.
We define $G[A]$ to be the \emph{subgraph of $G$ induced by $A$} with vertex set $A$ and edge set $\{xy \in E(G): x,y \in A\}$.
Further, we define $G - A:=G[V(G)\setminus A]$ to be the graph with vertex set $V(G)\setminus A$ and edge set $E(G)\setminus \{vw \in E(G): v \in A \;\text{or} \;w \in A\}$. 
  We define the \emph{external neighbourhood of $A$ in $G$} to be 
  $$
  N_G(A) := \{w \in V(G)\setminus A: \exists \; v \in A \; \mbox{such that}\; vw \in E(G)\}.
  $$
  For $k \in \mathbb{N}$ we define the \emph{ball of radius $k$ around $A$ in $G$}, denoted by $B_G^{k}(A)$, to be the set of vertices with graph distance at most $k$ from a vertex in~$A$.
For $v \in V(G)$, we define the \emph{degree of $v$ in $G$} to be $d_G(v) := |N_G(v)|$. Let 
$$
d(G):=\frac{1}{|G|}\, \sum_{v\in V(G)} d_G(v)
$$
denote the \emph{average degree} of~$G$.
For $A,B \subseteq V(G)$, we define $N_G(A,B) := N_G(A)\cap B$.
We define an \emph{$A,B$-path in $G$} to be a path which has one endpoint in $A$ and the other endpoint in $B$ and has no other vertices in $A$ or~$B$. 
For a subgraph $F \subseteq G$, we define $G\setminus F$ to be the graph with vertex set $V(G)$ and edge set $E(G)\setminus E(F)$. 
The \emph{length} of a path $P$ is $e(P)=|P|-1$,  the number of edges in it.
We say a collection of paths $\mathcal{P}$ is \emph{internally vertex disjoint} if for each pair of paths $P_1, P_2 \in \mathcal{P}$ 
the set of internal vertices of $P_1$ is disjoint from $V(P_2)$; in other words, if a vertex belongs to two different paths of $\mathcal{P}$ then it is an endpoint in both.
We note that sometimes, when it is clear from context, we drop the subscript $G$ from the above nomenclature.  For $h \in \mathbb{N}$, we define an \emph{$h$-star} to be the graph on $h+1$ vertices where one vertex has degree $h$ and all other vertices have degree one. 

For $\ell,t \in \bN$, we write $\tk^{(\ell)}_t$ for a balanced $K_t$-subdivision in which each edge is replaced by a path with $\ell$ internal vertices (i.e., a path of length $\ell+1$).

\subsection{Robust Koml\'{o}s-Szemer\'{e}di expansion}
\label{sec-def-komsze}

We use the following notion of expansion introduced by Haslegrave, Kim and Liu~\cite{HKL20}, which is essentially a robust form of the sublinear expansion property introduced by Koml\'{o}s and Szemer\'{e}di  in~\cite{kstop1, kstop2}. 
Informally speaking, this property states that even after removing a relatively small set of edges we can still guarantee sublinear expansion properties. % which is essentially the strongest type of expansion we can find in some subgraph of a graph 

\begin{defn}
   Let $\varepsilon_{1}>0$ and $k\in\bN$.
   A graph $G$ is an $(\varepsilon_{1},k)$-\textit{robust-expander} if for all $X\subseteq V(G)$ with $k/2\leq |X|\leq |G|/2$, and any subgraph $F\subseteq G$ with $e(F)\leq d(G)\cdot \eps(|X|)\cdot |X|$, we have
		$$
		|N_{G\setminus F}(X)| \geq \varepsilon(|X|)\cdot|X|,
		$$
		where
		\begin{equation}\label{eq:ep}
		\varepsilon(x)=\varepsilon\left(x, \varepsilon_{1}, k\right):=\left\{\begin{array}{cc}
			0 & \text { if } x<k / 5, \\
			\varepsilon_{1} / \log ^{2}(15 x / k) & \text { if } x \geq k / 5.
		\end{array}\right.
		\end{equation} 
\end{defn}

%Whenever our choices of $\ep_1$ and $k$ are clear, we omit them and write $\ep(x)$ for $\ep(x, \ep_1, k)$.

Observe that $\ep(x, \ep_1, k)$ decreases for $x \geq k/2$, but $\ep(x, \ep_1, k) \cdot x$ increases for $x \ge \frac{e^2 k}{15}$ (in particular, for $x \geq k/2$).

Importantly, Koml\'{o}s and Szemer\'{e}di~\cite{kstop2} proved that every graph $G$ contains an expander subgraph with comparable average degree to $G$, and Haslegrave, Kim and Liu~\cite[Lemma 3.2]{HKL20} proved the analogous result for robust-expanders. From now on we will refer to a robust-expander as an expander.

The following is a direct consequence of \cite[Lemma~3.2]{HKL20} and the well known facts that every graph $G$ has a bipartite subgraph~$H$ with $d(H) \geq d(G)/2$ and that every graph $H$ contains a subgraph of minimum degree at least $d(H)/2$.

\begin{lemma}\label{cor-expander}
    Let $\ep_1 \leq 1/1000$, $\ep_2 < 1/2$ and $d > 0$. Every graph $G$ with $d(G) \geq d$ has a bipartite $(\ep_1, \ep_2 d)$-expander subgraph $H$ with $\delta(H) \geq d/8$.
\end{lemma}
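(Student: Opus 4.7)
The plan is a short reduction chain combining two folklore operations with the central application of Lemma~3.2 of \cite{HKL20}.

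First, I would take a maximum cut $V(G) = A \cup B$ and set $G_1 := G[A,B]$. Since every vertex retains at least half of its degree across a maximum cut, $d(G_1) \geq d(G)/2 \geq d/2$ and $G_1$ is bipartite.

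Next, I would apply Lemma~3.2 of \cite{HKL20} to $G_1$ with expansion constant $\ep_1$ and threshold $\ep_2 d$. Because $d(G_1) \geq d/2$ and $\ep_2 < 1/2$, we have $\ep_2 d < 2\ep_2 \cdot d(G_1) < d(G_1)$, which is the slack needed to invoke HKL20's extraction at the target threshold. The output is a subgraph $H \subseteq G_1$ that is simultaneously bipartite (inherited from $G_1$), an $(\ep_1, \ep_2 d)$-robust-expander, and satisfies $\delta(H) \geq d(H)/2$ with $d(H) \geq d(G_1)/2 \geq d/4$; hence $\delta(H) \geq d/8$, as required.

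The only thing to be careful about --- and the reason HKL20's Lemma~3.2 is invoked directly rather than performing ``extract an expander'' and ``extract a min-degree subgraph'' as two separate steps --- is that iteratively deleting low-degree vertices from an expander could in principle damage the expansion property, since min-degree pruning raises the average degree and may violate the edge-deletion threshold in the expansion definition. By using HKL20's lemma, both the expansion and the min-degree conditions are produced in a single step, sidestepping this issue entirely. Beyond this one-line observation about parameter ordering, the argument is pure bookkeeping, so I do not anticipate any genuine technical obstacle.
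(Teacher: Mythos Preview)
Your proposal is correct and follows essentially the same route as the paper. The paper does not spell out a proof but simply records that the lemma is a ``direct consequence of \cite[Lemma~3.2]{HKL20} and the well known facts that every graph $G$ has a bipartite subgraph $H$ with $d(H)\ge d(G)/2$ and that every graph $H$ contains a subgraph of minimum degree at least $d(H)/2$''; your chain (maximum cut, then HKL20 extraction yielding both expansion and $\delta(H)\ge d(H)/2\ge d/8$) is exactly this, including your correct observation that the min-degree pruning must come packaged with the expander extraction rather than afterwards.
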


Note that the expansion subgraph $H \subseteq G$ found in Lemma~\ref{cor-expander} may be significantly smaller than $|G|$. Indeed, $G$ could be disjoint union of many copies of $H$.

A common use of the expansion condition is to connect %(usually large) 
two vertex sets together by a short path, which, as the following result shows, we can do even after removing a relatively smaller set of vertices. We will use the following version, which is a slight variation of Lemma 3.4 in \cite{lmerdoshajnal}. %(usually with paths of size comparatively much smaller than the size of the vertex sets being connected). 

\begin{lemma}\label{lem-mpath} 
	For each $0<\eps_1,\eps_2<1$, there exists $d_0=d_0(\eps_1,\eps_2)$ such that the following holds for each $n\geq d\geq d_0$ and $x\geq 1$.
		Let $G$ be an $n$-vertex $(\eps_1,\eps_2d)$-expander with $\delta(G)\geq d-1$. Let $A,B\subseteq V(G)$ with $|A|,|B|\geq x$, and  let $W\subseteq V(G)\setminus(A\cup B)$ satisfy $$|W|\leq \frac{\ep_1x}{4\log^2(\frac{15n}{\ep_2d})}.$$ Then, there is an $A,B$-path in $G-W$ with length at most $\frac{100}{\eps_1}\log^3\frac{15n}{\ep_2d}$.
\end{lemma}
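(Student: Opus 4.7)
The plan is a two-sided breadth-first search (BFS) argument in $G-W$. Set $X_0:=A$, $Y_0:=B$, and iteratively
\[
X_{i+1}:=X_i\cup(N_G(X_i)\setminus W),\qquad Y_{i+1}:=Y_i\cup(N_G(Y_i)\setminus W),
\]
so that $X_i,Y_i$ are the vertices within distance $i$ of $A,B$ respectively in $G-W$. The aim is to show that $|X_T|,|Y_T|>n/2$ for some $T\le\frac{50}{\ep_1}\log^3(15n/(\ep_2 d))$; since then $X_T,Y_T\subseteq V(G)\setminus W$ and $|X_T|+|Y_T|>n\ge |V(G)\setminus W|$, the balls must meet at some vertex $v$, and concatenating the BFS paths from $A$ to $v$ and from $v$ to $B$ yields an $A,B$-walk in $G-W$ of length $\le 2T$; any shortest subpath is the required path.

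\textbf{Growth step.} For each $i$ with $\ep_2 d/2\le|X_i|\le n/2$, apply the expansion property with $F=\emptyset$ to get $|N_G(X_i)|\ge\ep(|X_i|)\cdot|X_i|$, and subtract the at most $|W|$ vertices lost to $W$:
\[
|N_{G-W}(X_i)|\;\ge\;\ep(|X_i|)\cdot|X_i|-|W|.
\]
Since $\ep(s)\cdot s$ is increasing on $[\ep_2 d/2,\infty)$ (as noted just after the expansion definition), one has $\ep(|X_i|)\cdot|X_i|\ge\ep(\max\{x,\ep_2 d/2\})\cdot\max\{x,\ep_2 d/2\}\ge\frac{\ep_1 x}{\log^2(15n/(\ep_2 d))}$. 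The hypothesis $|W|\le\frac{\ep_1 x}{4\log^2(15n/(\ep_2 d))}$ then gives $|W|\le\tfrac{1}{4}\ep(|X_i|)\cdot|X_i|$, so $|N_{G-W}(X_i)|\ge\tfrac{3}{4}\ep(|X_i|)\cdot|X_i|$, producing the multiplicative growth
\[
|X_{i+1}|\;\ge\;|X_i|\bigl(1+\tfrac{3}{4}\ep(|X_i|)\bigr).
\]
Since $\ep(|X_i|)\ge\ep_1/\log^2(15n/(\ep_2 d))$ whenever $|X_i|\le n$, iterating and using $\log(1+a)\ge a/2$ for small $a>0$ gives $|X_T|>n/2$ after at most $T\le\frac{50}{\ep_1}\log^3(15n/(\ep_2 d))$ steps.

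\textbf{Bootstrap for small $x$.} If $x<\ep_2 d/2$, a single BFS step using only the minimum-degree hypothesis reaches the expansion regime: for any $v\in A$, since $|\{v\}\cup N_G(v)|\ge d$, we get $|X_1|\ge d-|W|\ge\ep_2 d/2$, where the last inequality uses the $|W|$-bound combined with $d\ge d_0(\ep_1,\ep_2)$. One then continues with the growth phase starting from $X_1$, and likewise for $Y$.

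\textbf{Main obstacle.} The principal subtlety is managing the forbidden set $W$ during BFS. A tempting approach is to invoke the robust half of the expansion by taking $F$ to be the edges incident to $W$, but this requires controlling $\sum_{w\in W}d_G(w)$, which could be arbitrarily large absent a maximum-degree bound on $G$. The precise form of the $|W|$-hypothesis is calibrated exactly so that plain (non-robust) expansion plus a direct subtraction of the vertices lost to $W$ already suffices to keep the BFS growing at the desired rate.
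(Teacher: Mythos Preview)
Your proposal is correct and follows essentially the same route as the paper: a two-sided BFS in $G-W$, the growth step invoking the expansion property with $F=\varnothing$ and then subtracting $|W|$ (showing $|W|\le\tfrac{1}{4}\,\eps(|X_i|)\,|X_i|$ via the monotonicity of $s\mapsto\eps(s)s$), and the bootstrap from a single vertex via $\delta(G)\ge d-1$ in the case $x<\eps_2 d/2$. The paper packages the growth phase as a separate claim and keeps only the weaker factor $\tfrac12$ rather than your $\tfrac34$, but the argument and its calibration are the same.
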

    
\begin{proof}
    We first prove the following claim.
    \begin{claim}\label{claim-ball-expands}
        Set $m_0:=\frac{40}{\eps_1}\log^3\frac{15n}{\ep_2d}$. % and $\ell_0:=(\log\log n)^5$ 
        Let $C \subseteq V(G-W$) with $|C| \geq \ep_2d/2$. Let $\ep(y)=\ep(y,\ep_1, \ep_2 d)$ be the function defined in~\eqref{eq:ep}. If $|W|\leq \ep(|C|)\cdot|C|/4$, then $|B^{m_0}_{G-W}(C)|>n/2$.
    \end{claim}
    \begin{poc}
    For each $i\in\mathbb{N}$, denote $C_i:=B^{i}_{G-W}(C)$ and set $C_0 := C$. 
    Using the expansion property of $G$ (with $F$ as the empty graph), our assumption that $|W|\leq \ep(|C|)\cdot|C|/4$ and the fact that $\ep(y)\cdot y$ increases for $y \geq \ep_2d/2$, we have for any $i\le m_0+1$ that $|C_{i-1}| > n/2$ (and so the claim holds by $|C_{m_0}|\ge |C_{i-1}|$), or
    $$|N_G(C_{i-1})|\geq \ep(|C_{i-1}|)\cdot|C_{i-1}|\geq\ep(|C|)\cdot |C|\geq 4\cdot|W|.$$
    Assuming that we are not done yet, we have
    $$|N_{G-W}(C_{i-1})|\geq |N_{G}(C_{i-1})|-|W|\geq\frac{1}{2}\,|N_G(C_{i-1})|.$$
    Now, using the above two inequalities and that $\ep(y)$ decreases for $y \geq \ep_2 d/2$, we bound
    $$|N_{G-W}(C_{i-1})|\geq\frac{1}{2}\ep(|C_{i-1}|)\cdot|C_{i-1}|\geq \frac{1}{2}\,\ep(n)\cdot|C_{i-1}|.$$
    Then, since $|C| \geq x$ we have
    $$|C_{m_0}|\geq\Big(1+\frac{1}{2}\,\ep(n)\Big)^{m_0}\cdot x.$$
    Finally,  we can solve $\big(1+\frac{1}{2}\,\ep(n)\big)^r\cdot \ep_2 d>n/2$ for $r$, and using
    the inequality $\log(1+x)\geq (1+\frac{1}{x})^{-1}$, see that the desired inequality holds  for $r=m_0$.
    \end{poc}
 
\newcommand{\hide}[1]{}    
\hide{
        \textcolor{red}{\begin{align*}
             & \left(1+\frac{1}{2}\ep(n)\right)^r\cdot \ep_2d >  n/2 \\
            \iff       \     &  r\log\left(1+ \frac{1}{2}\ep(n)\right) >  \log\left(\frac{n}{2\ep_2d}\right) \\
            \Longleftarrow        \    & r\left(1+ \frac{2}{\ep_1}\log^2\left(\frac{15n}{\ep_2d}\right)\right)^{-1} >  \log\left(\frac{n}{2\ep_2d}\right) \\
            \Longleftarrow  \    & r >  \log\left(\frac{n}{2\ep_2d}\right) + \frac{2}{\ep_1}\log^2\left(\frac{15n}{\ep_2d}\right)\cdot \log\left(\frac{n}{2\ep_2d}\right) \\
            \Longleftarrow   \   & r >  \frac{1}{2\ep_2}\log\left(\frac{n}{d}\right) + \frac{15}{\ep_1\ep_2^2}\log^3\left(\frac{n}{d}\right) \\
            \Longleftarrow  \    & r >  \frac{20}{\ep_1\ep_2^2}\log^3\left(\frac{n}{d}\right).
    \end{align*}}
}     
    We have two cases. If $x \geq \ep_2d/2$, we have
    $$\ep(x)\cdot \frac{x}{4}=\frac{\ep_1x}{4\,\log^2\big(\frac{15x}{\ep_2d}\big)}\geq |W|.$$ Then by Claim~\ref{claim-ball-expands} applied to $A$ and $B$, we have that $|B^{m_0}_{G-W}(A)|,|B^{m_0}_{G-W}(B)|>n/2$, which implies that there exists an $A,B$-path in $G-W$ with length at most $$2m_0\leq\frac{100}{\eps_1}\log^3\frac{15n}{\ep_2d},$$ as desired. 
    
    If $x < \ep_2d/2$, take vertices $a \in V(A)$ and $b \in V(B)$ and let $N_{G-W}(a) =: A'$ and $N_{G-W}(b) =: B'$. Observe that since $$|W| \leq \frac{\ep_1\ep_2d}{4\log^2(\frac{15n}{\ep_2d})} \leq \ep_2d/4$$ and $d(a), d(b) \geq d-1$, we have that $|A'|, |B'| \geq \ep_2d/2 > x$. Hence we have $\ep(|A'|)\cdot |A'|/4\geq |W|$ and $\ep(|B'|)~\cdot~|B'|/4~\geq~|W|$. Then by Claim~\ref{claim-ball-expands} applied to $A'$ and $B'$, we have that $|B^{m_0}_{G-W}(A')|> n/2$ and $|B^{m_0}_{G-W}(B')|>n/2$, which implies that there exists an $A,B$-path in $G-W$ with length at most $$2m_0+2\leq\frac{100}{\eps_1}\log^3\frac{15n}{\ep_2d},$$ as desired.
    \end{proof}

\subsection{Reducing Theorem~\ref{thm-main}}\label{sec-def-reduction}

Using Lemma~\ref{cor-expander}, we can assume that $G$ is a bipartite expander graph. If $G$ is very dense, a classic result of Alon, Krivelevich and Sudakov~\cite[Theorem 6.1]{alon} provides a balanced $1$-subdivision of a clique on $\Omega(\sqrt{|G|})$ vertices.
\begin{theorem}\label{thm-aks}
Let $\alpha > 0$. 
If $G$ is a graph with $n$ vertices and average degree $\alpha n$, then $G$ contains a copy of $\tk_{r}^{(1)}$ where $r:=\alpha n^{1/2}/2$.
\end{theorem}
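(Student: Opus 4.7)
The proof follows Alon, Krivelevich, and Sudakov~\cite{alon}. The first step is to pass to a subgraph $H$ of $G$ with minimum degree at least $\alpha n/4$ by iteratively removing any vertex of degree less than $\alpha n/4$; this deletes at most $\alpha n^2/4$ edges, so $H$ still contains at least half of $E(G)$ and in particular is nonempty. All subsequent work takes place inside $H$.

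The goal is to pick $r$ \emph{branch vertices} $v_1,\ldots,v_r\in V(H)$ and then, for each of the $\binom{r}{2}$ pairs $\{v_i,v_j\}$, to assign a distinct common neighbour $w_{ij}\in V(H)\setminus\{v_1,\ldots,v_r\}$; the branch vertices together with the $w_{ij}$ then form a copy of $\tk_r^{(1)}$. This assignment problem is exactly a matching problem in the auxiliary bipartite graph $\Gamma$ whose parts are $\binom{\{v_1,\ldots,v_r\}}{2}$ and $V(H)\setminus\{v_1,\ldots,v_r\}$, with $\{v_i,v_j\}$ joined to $w$ whenever $w\in N_H(v_i)\cap N_H(v_j)$. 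Our task therefore reduces to choosing the branch vertex set so that Hall's condition holds on the pair side of $\Gamma$.

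The key quantitative input is Cauchy--Schwarz: the number of paths of length two in $H$ is $\sum_{w\in V(H)}\binom{d_H(w)}{2}=\Omega(\alpha^2 n^2 |H|)$, so the average pair of vertices in $V(H)$ has $\Omega(\alpha^2 n)=\Omega(r^2)$ common neighbours. We then sample the branch vertices probabilistically, for instance as a uniformly random subset of $V(H)$ of size slightly larger than $r$; after discarding a small number of ``bad'' pairs whose common neighbourhood is atypically small, we pass to a subset of size exactly $r$ for which Hall's condition holds in $\Gamma$. Applying Hall's theorem then yields the required distinct representatives $w_{ij}$, completing the construction of $\tk_r^{(1)}$.

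The main obstacle is precisely this verification of Hall's condition: the average lower bound does not immediately preclude that the common neighbourhoods of many pairs all cluster inside a small set of vertices, in which case Hall would fail. Ruling out such a clustering configuration requires a more delicate counting or deletion step in the spirit of the Alon--Krivelevich--Sudakov argument (essentially a K\H{o}v\'ari--S\'os--Tur\'an-style bound on how concentrated common neighbourhoods can be), and is the reason the constant $1/2$ appears in $r=\alpha n^{1/2}/2$.
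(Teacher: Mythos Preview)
The paper does not actually prove Theorem~\ref{thm-aks}: it is quoted verbatim as \cite[Theorem 6.1]{alon} and used as a black box in the reduction in Section~\ref{sec-def-reduction}. So there is no ``paper's own proof'' to compare against; the relevant question is whether your sketch is a valid proof.

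As written, it is not. You set up the right picture (branch vertices plus a system of distinct common neighbours, phrased as a bipartite matching), but you stop precisely at the only nontrivial step: you explicitly say that verifying Hall's condition ``requires a more delicate counting or deletion step'' and leave it at that. The Cauchy--Schwarz bound you quote gives only an \emph{average} number of common neighbours per pair, and you correctly identify that this does not rule out clustering; but you then neither carry out the anti-clustering argument nor specify which pairs are ``bad'' and why deleting them leaves $r$ branch vertices with Hall satisfied. Without this, the argument has no content beyond the reduction to Hall.

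For orientation, the actual Alon--Krivelevich--Sudakov proof is not a random-sampling-plus-Hall argument at all. After passing to a subgraph of large minimum degree, they build the subdivision \emph{greedily}: vertices $v_1,\ldots,v_r$ are chosen one at a time, and at step $i$ one picks $v_i$ together with distinct common neighbours $w_{1i},\ldots,w_{i-1,i}$ of $v_i$ with each of $v_1,\ldots,v_{i-1}$, disjoint from everything used so far. The point is that the number of used vertices is always at most $\binom{r}{2}+r\le \alpha^2 n/4$, which is negligible compared to the minimum degree $\alpha n/4$; a short counting shows that some unused vertex still has $\ge i-1$ unused common neighbours with the previously chosen $v_j$'s. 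This sidesteps Hall entirely and is where the constant $1/2$ in $r=\alpha\sqrt{n}/2$ genuinely comes from. If you want to salvage your matching formulation, you would need to prove a defect-Hall statement of comparable strength, which is more work than the greedy argument.
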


For sparse expanders, the following result of Wang~\cite[Lemma $1.3$]{yw} provides a balanced clique subdivision of size linear in its average degree.

\begin{theorem}\label{thm-yanwang}
There exists $\ep_1 > 0$ such that for any $0 < \ep_2 < 1/5$ and $s \geq 20$, there exist $d_0 = d_0(\ep_1, \ep_2, s)$ and some constant $t > 0$ 
such that the following holds for each $n \geq d \geq d_0$ and $d < \log^s n$.
Suppose that $G$ is an $n$-vertex bipartite $(\ep_1, \ep_2 d)$-expander graph with $\delta(G) \geq d$. 
Then $G$ contains a copy of $\tk_{td}^{(\ell)}$ for some $\ell \in \mathbb{N}$.
\end{theorem}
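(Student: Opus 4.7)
The plan is to build the balanced $K_{td}$-subdivision by first placing $td$ branch vertices in $G$, attaching to each a local gadget (an \emph{adjuster}) that permits constructing paths from a designated exit of many different lengths, and then linking these exits by short paths and balancing everything to a common length at the end. Since $d < \log^s n$, all local gadgets can have size $\mathrm{polylog}(n)$ and the total number of vertices used throughout will be $O(d^2 \cdot \mathrm{polylog}(n)) = o(n)$, which means Lemma~\ref{lem-mpath} will remain applicable in the graph with everything used so far deleted.

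\textbf{Step 1: Branch vertices with adjusters.} I would greedily select $td$ vertices $v_1,\dots,v_{td}$ and, near each, construct an adjuster $\cA_i$ of size $\mathrm{polylog}(n)$ with a distinguished exit $u_i$. Iteratively applying Lemma~\ref{lem-mpath} inside successive balls around $v_i$, while avoiding all previously chosen adjusters, one can produce internally vertex-disjoint $v_i,u_i$-paths of every length in a window $[L_0,L_0+\Theta(\log^3 n)]$ of a prescribed parity. Since at every step only $\mathrm{polylog}(n)$ vertices have been used, the robust $(\varepsilon_1,\varepsilon_2 d)$-expansion carries over to the leftover graph and the greedy construction succeeds.

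\textbf{Step 2: Linking.} For each of the $\binom{td}{2}$ pairs $\{i,j\}$, apply Lemma~\ref{lem-mpath} with $A$ equal to the neighbourhood of $u_i$, $B$ equal to the neighbourhood of $u_j$ (both of size at least $d$, by $\delta(G)\ge d$), and with $W$ the union of all adjusters and previously routed links. For $t$ a sufficiently small positive constant depending on $\varepsilon_1,\varepsilon_2,s$, we maintain $|W|= O(d^2\cdot\mathrm{polylog}(n))$, which stays below the threshold $\varepsilon_1 x/(4\log^2(15n/(\varepsilon_2 d)))$ of Lemma~\ref{lem-mpath} applied with $x = d$. The lemma then yields a $u_i,u_j$-link of length at most $\frac{100}{\varepsilon_1}\log^3(15n/(\varepsilon_2 d))$.

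\textbf{Step 3: Balancing.} Every link now has some length in $[1,L]$ with $L=O(\log^3 n)$. Using the adjuster at one endpoint of each link, I would extend that link by the appropriate amount inside the adjuster so that its total length becomes a common value $\ell$ (with the correct parity), which is feasible because each adjuster offers a range of lengths of size at least $\Theta(\log^3 n)\ge L$. This yields the desired copy of $\tk_{td}^{(\ell)}$.

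\textbf{Main obstacle.} The key difficulty is preserving the expansion hypothesis of Lemma~\ref{lem-mpath} throughout both Step~1 and Step~2: the cumulative forbidden set $W$ must stay small compared to the expansion budget at every scale. This is precisely what forces $t$ to be a small enough positive constant depending on $\varepsilon_1,\varepsilon_2,s$, and what determines the size of $d_0$. A secondary subtlety is parity: since $G$ is bipartite, $u_i,u_j$-paths have a forced parity, so the adjusters must be built to provide paths of both parities, or one must track the parities consistently so that a single common length $\ell$ can be achieved for all $\binom{td}{2}$ links simultaneously.
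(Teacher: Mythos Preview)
This theorem is not proved in the paper at all: it is quoted verbatim as \cite[Lemma~1.3]{yw} and used as a black box to dispose of the sparse regime $d<\log^s n$. So there is no ``paper's own proof'' to compare against. That said, your proposal has a genuine quantitative gap that would make it fail as written.

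The problem is in Step~2. You apply Lemma~\ref{lem-mpath} with $A,B$ equal to neighbourhoods of the exit vertices $u_i,u_j$, so $x=d$, and you claim the forbidden set $W$ of size $O(d^2\cdot\mathrm{polylog}(n))$ stays below the threshold $\varepsilon_1 x/(4\log^2(15n/(\varepsilon_2 d)))\asymp d/\log^2 n$. But $d^2\log^3 n \le d/\log^2 n$ would force $d\le \log^{-5}n$, which is impossible. No choice of the constant $t>0$ rescues this: the obstruction is the factor $d$ between $|W|$ and $x$, not the constant in front. The same issue bites in Step~1 once you have placed $\Theta(d)$ adjusters. A second, related problem is architectural: a single adjuster $\cA_i$ with one exit $u_i$ cannot carry $td-1$ internally vertex-disjoint paths emanating from $v_i$ (they would all pass through $u_i$), so the balancing in Step~3 cannot be done inside one gadget per branch vertex.

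The fix, and this is exactly what both Wang's argument and the present paper (in the dense regime) do, is to equip each branch vertex with a \emph{unit}: a tree rooted at $v_i$ whose leaf set has size far exceeding $d$ (here, $d\cdot\mathrm{polylog}(n)$ suffices). One then links \emph{boundaries} of units rather than raw neighbourhoods, so that $x$ in Lemma~\ref{lem-mpath} dominates the accumulated $W$. Adjusters are built freshly for each pair and spliced into the link between the two boundaries, rather than sitting at the branch vertices; this also resolves the disjointness issue. Your outline has the right ingredients (robust short connections, adjusters for length control, parity bookkeeping), but the scales at which you deploy them are off by a factor of~$d$.
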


By Theorems~\ref{thm-aks} and~\ref{thm-yanwang}, to prove Theorem~\ref{thm-main} it suffices to prove the following:

\begin{theorem}\label{main-theorem1}
For each $0<\ep_1,\ep_2<1$, the following holds for all sufficiently large~$K$.
Let $G$ be an $n$-vertex bipartite $(\ep_1,\ep_2d)$-expander with $\delta(G)\ge d$, $n\ge Kd$ and $d\ge  \log^{800}n$. Then $G$ contains a copy of $\tk_{\sqrt{d}}^{(\ell)}$ for some $\ell \in \mathbb{N}$.
\end{theorem}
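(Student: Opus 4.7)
The goal is to exhibit $t = \sqrt{d}$ branch vertices $v_1, \ldots, v_t$ and $\binom{t}{2}$ internally disjoint $v_i,v_j$-paths, all of the same length $\ell+1$, inside the expander $G$. The plan is to follow the Liu--Montgomery absorption strategy already foreshadowed in the paper's outline: first build local gadgets called \emph{units}, then combine them into length-tunable gadgets called \emph{adjusters} and attach one to each branch vertex, then invoke Lemma~\ref{lem-mpath} to link adjusters together through the expander, and finally use the adjusters' built-in flexibility to make every resulting path have exactly the same length.

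The first block is to construct, around generic vertices $v \in V(G)$, a unit $U_v$ of polylogarithmic size equipped with a leaf set $L_v \subseteq V(U_v)$ of size polynomial in $d$, such that $v$ is connected to each $w \in L_v$ by an internally disjoint short path inside $U_v$. Units should be built by iteratively expanding from $N_G(v)$ using the $(\ep_1, \ep_2 d)$-expansion property, pruning at each step to maintain vertex disjointness. A greedy/density argument then produces $t = \sqrt{d}$ pairwise vertex-disjoint units inside the $n$-vertex host graph.

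The second block is to combine units into adjusters. An adjuster is a polylog-size subgraph with two distinguished external endpoints $a,b$ that admits an $a,b$-path of length exactly $m$ for every $m$ in some wide interval $[M, M+R]$. I would attach $t-1$ adjusters to each branch vertex $v_i$, with distinguished external leaf sets $L_{i,j}$ for $j \neq i$. The pairs $\{i,j\}$ are then processed in any order: at each step, Lemma~\ref{lem-mpath} is applied with source set $L_{i,j}$ and target set $L_{j,i}$, and avoidance set $W$ equal to the vertices used so far, producing a short connecting path of length at most $\frac{100}{\ep_1}\log^3(15n/\ep_2 d)$; the two adjusters at the endpoints then tune the total $v_i,v_j$-path length to the predetermined common value $\ell+1$.

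The main obstacle is reconciling three competing budgets. First, the adjuster tuning range $R$ must exceed $\frac{100}{\ep_1}\log^3(15n/\ep_2 d)$, so that every connecting path can be equalised to length $\ell+1$. Second, the leaf sets $L_{i,j}$ must be large enough that the cumulative vertex usage $W$ across all iterations satisfies $|W| \leq \frac{\ep_1 |L_{i,j}|}{4\log^2(15n/\ep_2 d)}$ when Lemma~\ref{lem-mpath} is invoked. Third, all units and adjusters together must occupy a small enough fraction of $V(G)$ that expansion continues to hold on the remaining graph. The assumptions $d \geq \log^{800} n$ and $n \geq Kd$ with $K$ large provide considerable slack for these estimates, but arranging all three simultaneously --- especially constructing adjusters with a sufficiently wide tuning range while keeping their size polylogarithmic --- is the heart of Sections~\ref{sec-build-adj} and~\ref{sec-link-adj}, and is where I expect most of the technical work to lie.
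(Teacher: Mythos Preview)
Your high-level plan follows the paper's Liu--Montgomery absorption strategy, but two structural choices in your outline diverge from the paper in ways that would break the budget arithmetic.

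First, you propose to attach $t-1$ adjusters to each of the $t=\sqrt{d}$ branch vertices up front, giving $\Theta(d)$ adjusters in total. Each adjuster in this setting carries two units whose interiors alone have size $\Theta(\sqrt{d}\,\mathrm{polylog}\,n)$, so the pre-built gadgets would occupy $\Theta(d^{3/2}\,\mathrm{polylog}\,n)$ vertices. But the robust-construction lemmas for units and adjusters (Lemmas~\ref{lem-unit}, \ref{lem-router-dense-max}, \ref{lem-chain-dense-max}) only tolerate an avoidance set $W$ of size $O(d\,\mathrm{polylog}\,n)$; once $|W|$ exceeds that, you can no longer build the next adjuster. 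The paper avoids this by \emph{not} pre-building adjusters: it fixes $t$ disjoint units once (their interiors together have size only $O(d\,\mathrm{polylog}\,n)$), and then, when connecting a given pair $(v_i,v_j)$, it constructs a single fresh adjuster in $G-W$, routes a path of the prescribed length through it (Lemma~\ref{lem-precise-path}), and discards everything except that path. Only the $\binom{t}{2}$ paths, each of length $O(m^3)$, accumulate in $W$, keeping $|W|\le dm^9$ throughout.

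Second, you omit the dichotomy that drives the adjuster construction: either $G$ already contains $\tk_{\sqrt d\,m}^{(1)}$ (and we are done), or it is $\tk_{\sqrt d\,m}^{(1)}$-free. The paper uses the latter assumption essentially, via Proposition~\ref{littleprop}, inside Lemma~\ref{lem-router-dense-max} to control vertices with many neighbours in the forbidden set and thereby locate a short even cycle that seeds the $(50m,1)$-adjuster. Without this $\tk^{(1)}$-free hypothesis there is no obvious way to find the first adjuster. Relatedly, units are not built around prescribed vertices by expanding $N_G(v)$; Lemma~\ref{lem-unit} finds a unit together with its core vertex, and those cores then serve as the branch vertices.
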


For brevity, throughout this paper we set $$m:=\left\lfloor\log^4\frac{n}{d}\right\rfloor.$$ Actually, we shall prove a stronger version of Theorem~\ref{main-theorem1}:

\begin{theorem}\label{main-theorem}
For each $0<\ep_1,\ep_2<1$,  the following holds for all sufficiently large~$K$.
Let $G$ be an $n$-vertex bipartite $(\ep_1,\ep_2d)$-expander with $\delta(G)\ge d$, $n\ge Kd$ and $d\ge  \log^{800}n$. %\footnote{JH: I think it should really be $\log^{800}(n)$ here rather than $m^{200}$. Indeed, $d < log^{800}(n)$ is handled by Theorem~\ref{thm-yanwang}, and clearly $\log^{800}(n) \geq m^{200}$.}
Then $G$ contains a copy of $\tk_{\sqrt{d}m}^{(\ell)}$ for $\ell=1$ or $\ell=80m^3$.

\end{theorem}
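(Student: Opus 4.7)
The plan is to construct the balanced clique subdivision inside the sublinear expander $G$ by following the framework of Sections~\ref{sec-build-units}--\ref{sec-link-adj}: build small local pieces (\emph{units}), glue them into absorbing gadgets (\emph{adjusters}), and then link a chosen set of $r:=\sqrt{d}\,m$ branch vertices pairwise by internally disjoint paths of a common length.

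First, using Lemma~\ref{lem-mpath} repeatedly I would produce a large family of pairwise (almost) disjoint units, each essentially a short theta-subgraph with two designated exit vertices that offers a small number of consecutive-length exit-to-exit paths. The length bound $\frac{100}{\ep_1}\log^3\frac{15n}{\ep_2d}=O(m^{3/4})$ in Lemma~\ref{lem-mpath} keeps each unit polynomial in $m$. Chaining $\Theta(m^3)$ units along further short expansion paths then yields each adjuster: a subgraph with two exposed endpoints $u,v$ that admits a $u,v$-path of every integer length in a contiguous interval of size $\Theta(m^3)$. Each adjuster costs only a polynomial-in-$m$ number of vertices, which is easily afforded since $d\ge\log^{800}n$ and $m=\log^4(n/d)$.

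To link the branch vertices, I would select a pool of slightly more than $r$ candidates and handle the $\binom{r}{2}$ pairs one at a time: for a pair $\{x,y\}$, Lemma~\ref{lem-mpath} gives two short paths, one from $x$ and one from $y$, reaching the two exits of a fresh adjuster while avoiding all previously used vertices, and one then picks the internal adjuster path whose length makes the total $x,y$-path exactly $80m^3+1$. The cumulative vertex cost is bounded by $d$ times a polynomial in $m$, hence $o(n)$, so the residual graph stays a robust expander throughout and Lemma~\ref{lem-mpath} remains applicable at every step.

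The dichotomy $\ell=1$ versus $\ell=80m^3$ in the statement should reflect the possibility that this adjuster-based linking breaks at branch vertices with an unusually concentrated neighbourhood. When this obstruction occurs globally on the candidate pool, one should instead extract $r$ vertices on one side of the bipartition each pair of which has a distinct private common neighbour on the other, producing $\tk^{(1)}_r$ directly. The main obstacle I foresee is making this dichotomy quantitatively clean: one needs to argue that whenever the long-path/adjuster route fails, the failure witnesses enough density uniformly on the pool to deliver all $\binom{r}{2}$ required private common neighbours simultaneously, rather than only in isolated pockets.
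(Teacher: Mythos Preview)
Your high-level outline is in the right spirit, but there are two genuine gaps that prevent it from going through.

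\textbf{The role of units.} You describe a unit as ``essentially a short theta-subgraph with two designated exit vertices''; that is not what a unit is, and the misidentification hides the main difficulty. In the paper a $(h_0,h_1,\ell)$-unit is a tree rooted at a single core vertex $v$, with $h_0$ short internal paths and a boundary of $h_0h_1$ leaves. Its purpose is to make $v$ behave like a vertex of enormous degree: the boundary has size $\Theta(dm^{28})$, so Lemma~\ref{lem-mpath} can be invoked from it even after a set $W$ of size up to $dm^{10}$ has been deleted. This is essential because the subdivision you are building has $\binom{\sqrt{d}m}{2}\cdot 80m^3=\Theta(dm^5)$ vertices, which swamps the minimum degree~$d$. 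Your linking step (``Lemma~\ref{lem-mpath} gives two short paths, one from $x$ and one from $y$, reaching the two exits of a fresh adjuster while avoiding all previously used vertices'') therefore fails as stated: a single branch vertex only supplies a set of size $d$, and once $|W|\gg d/\log^2(n/d)$ Lemma~\ref{lem-mpath} gives nothing. The paper fixes this by first anchoring each of the $\sqrt{d}m$ branch vertices in its own unit (Lemma~\ref{lem-unit}) and always routing from unit boundaries, never from bare vertices; the argument that the interiors stay almost untouched throughout (Lemma~\ref{lem-precise-path}) is what makes the greedy linking survive all $\binom{\sqrt{d}m}{2}$ rounds.

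\textbf{Where the $\ell=1$ alternative enters.} The dichotomy is not a fallback triggered when linking at a branch vertex fails. One assumes from the outset that $G$ is $\tk_{\sqrt{d}m}^{(1)}$-free and uses this \emph{inside} the construction of the basic $(50m,1)$-adjuster (Lemma~\ref{lem-router-dense-max}). Concretely, when one tries to find a vertex with two neighbours in a large set $B\subseteq\bd(F_1)$ and no such vertex exists outside the forbidden set $W'$, one must show that most vertices of $B$ still have $\ge d/2$ neighbours outside~$W'$. Proposition~\ref{littleprop} converts $\tk^{(1)}$-freeness into the bound $|B_0|\le O(dm^{24})$ on the bad vertices, and the construction proceeds. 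Your proposed route---wait for the adjuster machinery to stall and then harvest common neighbours on the candidate pool---does not supply this density control at the point where it is actually needed, and it is not clear how a local stall in linking would yield the uniform bipartite density that Proposition~\ref{littleprop} requires to produce $\tk_{\sqrt{d}m}^{(1)}$.
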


Note that $m\ge \lfloor\log^4 K\rfloor$, so we can choose $K$ sufficiently large to ensure that $m$ is large enough for all later statements and proofs. % Moreover, choosing $K$ such that $\frac{K}{\log^{800}K}\ge 1000$.
% Notice that $f(x)=\frac{x}{\log^{800}x}$ is increasing when $f(x)>1000$, so it can conclude that $n\ge 1000dm^{200}$. \footnote{Not accomplished.}
Moreover, since $n/d \geq K$, taking $K$ sufficiently large we can assume that~$n \geq dm^{200}$.
%In particular, for all $\ell \leq m^5$ we have $|\tk^{(\ell)}_{\sqrt{d}m}| < n$, and so $G$ can feasibly contain a copy of $\tk^{(\ell)}_{\sqrt{d}m}$.

\section{Proof sketch of Theorem~\ref{main-theorem}}\label{sec-proof-ideas}

Assume that the graph $G$ contains no $\tk_{\sqrt{d}m}^{(1)}$. Thus we have to find
a copy of $\tk_{\sqrt{d}m}^{(\ell)}$, where $\ell:=80m^3$. 
One immediate obstruction to  a naive greedy construction is that the desired subdivision  has $\binom{\sqrt{d}m}{2}\ell \geq dm$ vertices which is much larger than our lower bound $d$ on the minimal degree $\delta(G)$. That is, if we were to construct a copy of $\tk_{\sqrt{d}m}^{(\ell)}$ one path at a time, we may arrive at a point when all unused vertices in $G$ have neighbours only internal to the previously constructed paths. This would be overcome if there existed sufficiently many vertices of degree $\Omega\left(\binom{\sqrt{d}m}{2}\ell\right)$, but we cannot guarantee this. 

However, using the expansion property of $G$ (in particular, Lemma~\ref{lem-mpath}) 
%and that $d \geq m^{200}$ 
we can find $\sqrt{d}m$ rooted trees, called \emph{units}, each containing $\Theta(dm^{28})$ leaves, all at the same distance from the root 
%and $O(\sqrt{d}m^8)$ non-leaves, called \emph{units} 
(see Definition~\ref{def-unit} and Figure~\ref{fig-unit}). Also, each constructed unit will have very small \emph{interior} (that is, the set of its non-leaf vertices), namely of size at most $O(\sqrt{d}m^8)$, and any two distinct units  will have disjoint interiors.
Thus we would like to find, for every pair of units,  a path between their \emph{boundaries} (that is, their sets of leaves) so that the paths extended to the roots all have length $\ell$ and are internally disjoint.

For this step, we create
%$\binom{\sqrt{d}m}{2}$ 
certain structures called \emph{adjusters}, introduced by Liu and Montgomery in \cite{lmerdoshajnal}. While the adjusters were constructed in sparse expanders in \cite{lmerdoshajnal}, the bulk of the work in our paper is to construct adjusters in dense expanders whose average degree could be a polynomial of the order of the expander.
Roughly speaking, a \emph{$(j, k)$-adjuster} consists of two units rooted at $x$ and $y$ together with a collection of (not necessarily internally vertex disjoint) $x,y$-paths that have lengths $\ell', \ell'+2, \ell'+4, \ldots, \ell' + 2k$ for some $\ell' \leq j$. We call $\ell'+2k$ the \emph{length} of the adjuster. We first construct an $(O(m), 1)$-adjuster (Lemma~\ref{lem-router-dense-max}). We then chain together $(O(m), 1)$-adjusters to form a $(O(m^3), \Omega(m^2))$-adjuster whose length is contained in some fixed interval of length $o(m^2)$ (Lemma~\ref{lem-chain-dense-max}). Such an $(O(m^3), \Omega(m^2))$-adjuster can then be used along with Lemma~\ref{lem-mpath} to construct a path between the boundaries of any two given units that has the desired length and also avoids any given relatively small set $W$ (Lemma~\ref{lem-precise-path}). Namely, we connect the two unit boundaries to the two opposite ends of the adjuster via short paths and then choose a path of the appropriate length inside the adjuster.

The proof is completed by connecting each pair of roots of the $\sqrt{d}m$ constructed units, one by one in some order, by a path of length $\ell$ through a new $(O(m^3), \Omega(m^2))$-adjuster for each pair as above (satisfying appropriate disjointness conditions in each of these steps). Of course, there are a number of technical issues to take care of such as, for example, making sure that a bulk  of each unit remains available throughout the whole procedure.

\section{Building units}\label{sec-build-units}

\begin{defn}\label{def-unit}
	An \emph{$(h_0,h_1,\ell)$-unit} $F$ consists of a \emph{core} vertex $v$ and $h_0$ pairwise vertex disjoint $h_1$-stars $S_{u_i}$ in $F-v$, with centres $u_i$ respectively, $i\in [h_0]$, along with $v,u_i$-paths $P_i$, which are internally vertex disjoint from each other and $\cup_{i \in [h_0]}V(S_{u_i})$. 
	Furthermore, all paths $P_i$ are of length exactly $s$, for some $s< \ell$.
	We call the union of all vertices in the paths $\inte(F):=\cup_{i\in[h_0]}V(P_i)$ the \emph{interior of $F$}, and $\bd(F):=V(F)\setminus \inte(F)$ the \emph{boundary of $F$}.
    % For each $i\in[h_0]$, $P_i\cup S_{u_i}$ is a \emph{branch} of the unit. 
	We say that two units are \emph{disjoint} if their interiors are vertex-disjoint. 
\end{defn}

\begin{figure}[!ht]
    \centering
    
\begin{tikzpicture}[scale=1.5]
    \node[inner sep= 1pt](v1) at (0,3)[circle,fill]{};
	\node[inner sep= 1pt](v1l) at (-0.3,3)[]{\small $v$};
	\node[inner sep= 1pt](v11) at (2,4)[circle,fill]{};
	\node[inner sep= 1pt](v12) at (2,3)[circle,fill]{};
	\node[inner sep= 1pt](v13) at (2,2)[circle,fill]{};
	\draw[decorate, decoration=snake, segment length=6mm,darkslateblue] (v1) -- (v11);
	\draw[decorate, decoration=snake, segment length=6mm,darkslateblue] (v1) -- (v12);
	\draw[decorate, decoration=snake, segment length=6mm,darkslateblue] (v1) -- (v13);
	\node[inner sep= 1pt](a1) at (2.8,4.3)[circle,fill]{};
	\node[inner sep= 1pt](a2) at (2.8,4.1)[circle,fill]{};
	\node[inner sep= 1pt](a3) at (2.8,3.9)[circle,fill]{};
	\node[inner sep= 1pt](a4) at (2.8,3.7)[circle,fill]{};
	\draw (v11) -- (a1);
	\draw (v11) -- (a2);
	\draw (v11) -- (a3);
	\draw (v11) -- (a4);
	\node[inner sep= 1pt](b1) at (2.8,3.3)[circle,fill]{};
	\node[inner sep= 1pt](b2) at (2.8,3.1)[circle,fill]{};
	\node[inner sep= 1pt](b3) at (2.8,2.9)[circle,fill]{};
	\node[inner sep= 1pt](b4) at (2.8,2.7)[circle,fill]{};
	\draw (v12) -- (b1);
	\draw (v12) -- (b2);
	\draw (v12) -- (b3);
	\draw (v12) -- (b4);
	\node[inner sep= 1pt](c1) at (2.8,2.3)[circle,fill]{};
	\node[inner sep= 1pt,black](c2) at (2.8,2.1)[circle,fill]{};
	%\node[inner sep= 1pt,deepcarmine](c2l) at (3.05,2.05)[]{\small $w_2$};
	\node[inner sep= 1pt](c3) at (2.8,1.9)[circle,fill]{};
	\node[inner sep= 1pt](c4) at (2.8,1.7)[circle,fill]{};
	\draw (v13) -- (c1);
	\draw (v13) -- (c2);
	\draw (v13) -- (c3);
	\draw (v13) -- (c4);
	\draw[rotate=90][dashed,deepcarmine] (3,-2.8) ellipse (50pt and 12pt);
	\node[inner sep= 1pt,deepcarmine](bdl) at (2.8,5){\small $\bd(F)$};
	\node[inner sep= 1pt,cadmiumgreen](intl) at (1.1,5){\small $\inte(F)$};
	\draw[rounded corners=15pt][dashed,cadmiumgreen]
  (-0.15,1.5) rectangle ++(2.4,3);
  \node[inner sep= 1pt,darkslateblue](l) at (0.8,3.8){\small $s$};
  \node[inner sep= 1pt,darkslateblue](l) at (1.1,3.15){\small $s$};
  \node[inner sep= 1pt,darkslateblue](l) at (1.1,2.6){\small $s$};
  \draw[<->,darkslateblue] (0,3.2) -- (1.85,4.1);
\end{tikzpicture}
\caption{A $(3,4,s+1)$-unit $F$.}\label{fig-unit}
\end{figure}
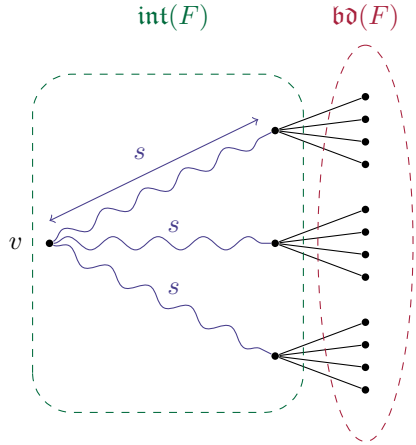

%\textcolor{red}{We can delete this red part if you are happy with my footnote and the change I did in def 2.2.}

%\textcolor{red}{We will need the following robust expansion result of Liu.\footnote{JH: Hong mentioned this (minus the preamble) in one of our meetings recently. Irene (or Hong), do you know where its from/is it a corollary of some previous result?}\footnote{IG: I think this is a definition. Komlos and Szemeredi introduced the notion of expander and Kim-Liu-Haslegrave introduced the notion of \emph{robust expander}, which is the same as our def 2.2 but with the same rate of expansion after deleting a small set of edges (see Def 3.1 in~\cite{HKL20}). Then, what they show is that we can find a a robust-expander subgraph in any graph (see Lemma 3.2 in~\cite{HKL20}) I will change the definition of expander to the def of robust expander, as it generalises the former (just put $F=\varnothing$).}}
%\textcolor{red}{
%\begin{lemma}\label{lem-robust-expansion}
 %    For each $0 < \ep_1, \ep_2 < 1$, there exists $K > 0$ such that the following holds. Let $G$ be an $n$-vertex bipartite $(\ep_1, \ep_2d)$-expander with $\delta(G) \geq d$ and $n \geq Kd$. Then given any $X \subseteq V(G)$ with $\frac{d}{1000} \leq |X| \leq n/2$ and any $F \subseteq E(G)$ with $|F| \leq \frac{d|X|}{m}$ we have that $$|N_{G\setminus F}(X)| \geq \frac{|X|\ep(|X|)}{2}.$$\footnote{JH:Probably better to put a definition of $G \setminus F$ in the notation section.}\footnote{ZW:maybe we need move this lemma to Section 2?}
%\end{lemma}}

See Figure~\ref{fig-unit} for an illustration of a unit. We now show the existence of a unit in a dense graph after removing a relatively small set of vertices.

\begin{lemma}\label{lem-unit}
For each $0<\ep_1,\ep_2<1$, the following holds for all sufficiently large~$K$. 
Let $G$ be an $n$-vertex bipartite $(\ep_1,\ep_2d)$-expander with $\delta(G)\ge d$, $n\ge Kd$ and $d\ge  m^{200}$.
Then, given any $W\subseteq V(G)$ with $|W|\leq dm^{30}$, $G-W$ contains a $(4\sqrt{d}m^6,\sqrt{d}m^{22},10m)$-unit.
\end{lemma}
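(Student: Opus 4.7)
The construction strategy is to carve the unit out of a BFS tree in $G-W$ rooted at a carefully chosen core vertex $v$. The $h_0 = 4\sqrt d m^6$ centers $u_i$ will be $h_0$ vertices at a common BFS depth $s < 10m$ reached from $v$ by internally vertex disjoint paths, and their stars $S_{u_i}$ of $h_1 = \sqrt d m^{22}$ leaves will be harvested from the next BFS layer $L_{s+1}$.

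Picking $v$ is the delicate step, because $|W| \le dm^{30}$ is allowed to be much larger than $\delta(G) \ge d$, so an arbitrary vertex may have no neighbors in $G - W$. I would first show that the largest connected component $C^*$ of $G - W$ satisfies $|C^*| \ge n/2$: otherwise a suitable union of components of $G - W$ gives a set $X \subseteq V(G) \setminus W$ with $|X| \in [n/4, n/2]$ and $N_G(X) \subseteq W$, forcing by the expansion of $G$ that $|W| \ge |N_G(X)| \ge \eps(|X|)|X| \gtrsim \eps_1 n / \sqrt m$, which contradicts $|W| \le dm^{30}$ since $n \ge dm^{200}$. Next, I would choose $v \in C^*$ whose BFS ball in $G - W$ reaches the "expansion threshold" $\Theta(dm^{31})$ within $o(m)$ steps, where the bound $|W| \le \eps(|B|)|B|/2$ kicks in and the expansion of $G$ yields $|N_{G - W}(B)| \ge \eps(|B|) |B|/2$ so that the ball grows geometrically.

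With $v$ in hand, run BFS from $v$ in $G - W$ to produce levels $L_0 = \{v\}, L_1, L_2, \ldots$. After the ball passes the threshold, each step multiplies the ball size by at least $1 + \Omega(1/\sqrt m)$, so within an extra $O(m^{3/4}) \ll 10m - 2$ steps one can arrange both $|L_s|$ and $|L_{s+1}|$ to exceed $10^4 h_0 h_1 \sqrt m$; this fixes the common path length $s < 10m$. Let $T$ be the resulting BFS tree, and for $u \in L_s$ denote by $\phi(u) \in L_1$ the second vertex on the unique $v$-to-$u$ path in $T$. Since paths with distinct $\phi$-values are automatically internally vertex disjoint, I would iteratively pick $u_1, \ldots, u_{h_0} \in L_s$ with pairwise distinct $\phi(u_i)$, at each step also insisting that $u_i$ has at least $h_1$ neighbors in $L_{s+1}$ outside the at most $(i-1) h_1 \le h_0 h_1$ leaves already committed. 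Because $|L_{s+1}|$ is much larger than $h_0 h_1$ and the total forward edge count $\sum_{u \in L_s} |N_{G-W}(u) \cap L_{s+1}| \ge |L_{s+1}|$ is comfortably more than $h_0 h_1$, a standard greedy pass or a Hall-type defect argument (optionally followed by a short secondary BFS from each $u_i$ into $L_{s+1} \cup L_{s+2}$ to gather private leaves) produces the desired disjoint stars. Together the paths $P_i$ of common length $s$ and the stars $S_{u_i}$ assemble into the required $(4\sqrt d m^6, \sqrt d m^{22}, 10m)$-unit inside $G - W$.

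The hard part is the first step: ruling out a scenario in which every candidate core $v \in C^*$ has a BFS ball in $G - W$ that stalls below the expansion threshold $\sim dm^{31}$ because an adversarial $W$ is concentrated on high-degree hubs. Handling this requires applying the expansion of $G$ on multiple scales (for instance, bounding the set of "skinny" vertices in $C^*$ using expansion of their joint neighborhood into $W$), and I expect this to absorb most of the technical effort. Once a good root is secured, Steps 2 and 3 are routine BFS-and-expansion bookkeeping.
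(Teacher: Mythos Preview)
Your approach is genuinely different from the paper's, and the part you flag as ``the delicate step'' is indeed where it breaks down---but not only there. Even granting a root $v$ with $d_{G-W}(v)\ge 4\sqrt d\,m^6$ and BFS layers $L_s,L_{s+1}$ of size $\gg h_0h_1\sqrt m$, your extraction of the unit does not follow. You need $h_0$ vertices $u_i\in L_s$ with pairwise distinct $\phi(u_i)$ \emph{and} each having $\ge h_1$ neighbours in $L_{s+1}$. Your counting $\sum_{u\in L_s}|N_{G-W}(u)\cap L_{s+1}|\ge |L_{s+1}|$ only yields average forward degree $\ge 1$, not $\ge h_1$; all the forward mass could be concentrated in a single $\phi$-class. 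Nothing in the BFS structure prevents the tree from being wildly unbalanced, with most of $L_s$ (and all its high-forward-degree vertices) descending from a handful of children of~$v$. The ``secondary BFS from each $u_i$'' does not help either: by the unit definition the star $S_{u_i}$ must be centred at $u_i$ itself, so $u_i$ must personally have $h_1$ available neighbours, and a low-degree $u_i$ cannot be repaired.

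The paper sidesteps this entirely by reversing the order of construction. It first shows, using the \emph{edge}-robustness of the expander, that after deleting any set of size $\le dm^{74}$ some vertex still has degree $\ge d/m^5$ in the remainder (otherwise, taking $U$ of size $dm^{76}$, all edges incident to $U$ in $G-W$ form a subgraph $F$ with $e(F)\le d(G)\,\eps(|U|)\,|U|$, yet $N_{G\setminus F}(U)\subseteq W$ is too small). Iterating this yields $m^{40}$ disjoint ``big'' stars and $\sqrt d\,m^{49}$ disjoint ``small'' stars up front. Only then does it connect star centres by short paths, via a maximal family $I$ of internally disjoint paths of length $\le 2m$ between centres: either some big-star centre is linked to $\sqrt d\,m^8$ others (pigeonhole on path length gives the unit), or Lemma~\ref{lem-mpath} produces a new path contradicting maximality. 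The point is that the high-degree star centres are secured \emph{before} any connectivity argument, so one never has to hope that a BFS tree happens to land on them.
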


\begin{proof}
    We will first construct $m^{40} + \sqrt{d}m^{49}$ disjoint stars, $m^{40}$ of which will have $d/m^5$ leaves each and $\sqrt{d}m^{49}$ of which will have $\sqrt{d}m^{24}$ leaves each. 
    We will then show that some collection of stars can be connected together (possibly losing some leaves of the stars in the process) in order to create a $(4\sqrt{d}m^6,\sqrt{d}m^{22},10m)$-unit.
    
\begin{claim}\label{claim-many-stars}
    Let $W \subseteq V(G)$. If $|W|  \leq dm^{74}$, then there exists a vertex $v \in G - W$ such that $d_{G-W}(v) \geq d/m^5$. 
\end{claim}
\begin{poc}
    Suppose not. Then $\Delta(G-W) < d/m^5$. Recall that, since $n \geq Kd$, we must have $n \geq dm^{200}$. Thus, since $|W| \leq dm^{74}$, we can take a subset of vertices $U \subseteq V(G)\setminus W$ with $|U| = dm^{76}$. Define $F$ be the graph on $V(G)\setminus W$ with edge set $$E(F) := \{uv\in E(G):u \in U, v \in V(G)\setminus W\}$$ and no isolated vertices. 
    Then 
    $$e(F) \leq \Delta(G-W)\cdot |U| \leq \frac{d\,|U|}{m^5}  \leq d(G)\cdot \ep(|U|,\ep_1,\ep_2d)\cdot |U|.$$
    Observe that $N_{G\setminus F}(U) \subseteq W$. Hence $|N_{G\setminus F}(U)| \leq |W| \leq dm^{74}$. But, as $G$ is an expander, we have that $|N_{G\setminus F}(U)| \geq \frac{1}{2}|U|\,\ep(|U|,\ep_1,\ep_2d) \geq  dm^{75}$, which is a contradiction.
\end{poc}

Now, iteratively apply Claim~\ref{claim-many-stars} to $G-W$ a total of $m^{40}$ times, at each iteration $i$, $1 \leq i \leq m^{40}$, removing a star $S_i$ with centre $u_i$ and $d/m^5$ leaves from the current graph and adding it to a set~$S^1$. 
Let $V(S^1)$ be the set of vertices in the stars in $S^1$ and observe that all stars in $S^1$ are vertex disjoint. 
Observe that one can do this because throughout the process $|V(S^1)| \leq dm^{35}$ and thus $|W \cup V(S^1)| \leq 2dm^{40} \leq dm^{73}$ also.

Next, we iteratively apply Claim~\ref{claim-many-stars} to $G-(W\cup S^1)$ a further $\sqrt{d}m^{49}$ times, at each iteration $i$, $m^{40}+1\leq i \leq m^{40}+\sqrt{d}m^{49}$, removing a star $S_i$ with centre $u_i$ and $\sqrt{d}m^{24}$ leaves from the current graph and adding it to a set $S^2$. 
Let $V(S^2)$ be the set of vertices in the stars in $S^2$ and observe that all stars in $S^2$ are vertex disjoint. 
Observe that one can do this since $d \geq m^{200}$ implies $\sqrt{d}m^{24} \leq d/m^5$, and throughout the process $|V(S^2)| \leq dm^{73}$ and thus $|W \cup V(S^1) \cup V(S^2)| \leq dm^{74}$ also. 

So now we have in $G-W$ a set $S^1$ of $m^{40}$ stars each with $d/m^5$ leaves and a set $S^2$ of $\sqrt{d}m^{49}$ stars each with $\sqrt{d}m^{24}$ leaves, such that all stars in $S^1 \cup S^2$ are vertex disjoint from each other. 
Let $\ell:=m^{40}+\sqrt{d}m^{49}$. 
Take $I\subseteq [m^{40}] \times [\ell]$ to be a maximal subset for which there are paths $P_{(i,j)}$, $(i,j)\in I$, in $G-W$ such that the following holds.

\begin{itemize}
\item For each $(i,j)\in I$, $P_{(i,j)}$ is a $u_i,u_j$-path with length at most $2m$ which is disjoint from $\{u_k:k\in [\ell]\setminus\{i,j\}\}$.
\item The paths are internally vertex disjoint.
%For each $\{i,j\},\{i,j'\}\in I$ with $j\neq j$, $V(P_{\{i,j\}})\cap V(P_{\{i,j\}})=\{i\}$.
\end{itemize}

Suppose there is some $i\in [m^{40}]$ and  $J\subseteq [\ell]$ with $(i,j)\in I$, for each $j\in J$, with $|J|= \sqrt{d}m^{8}$.
Let $U:=\cup_{j\in J}V(P_{(i,j)})$, and note that $|U|\leq 2\cdot \sqrt{d}m^{9}$. 
Hence, for any $j\in J$, $$|V(S_j)\setminus U|\geq \sqrt{d}m^{24}/2 \geq \sqrt{d}m^{23}.$$ %(since $n \geq Kd$). 
By pigeonhole, we can pick a subset $J'\subseteq J$ such that all paths $P_{(i,j)}$ with $j\in J'$ have the same length, and $|J'|\geq |J|/2m \geq 4\sqrt{d}m^6$.
Taking stars $S_{j'}\subseteq S_j-U$, $j\in J'$, with $\sqrt{d}m^{22}$ leaves, and the paths $P_{(i,j)}$, $j\in J'$, we get a $(4\sqrt{d}m^6,\sqrt{d}m^{22},10m)$-unit.

Suppose then that there are no such $i\in [m^{40}]$ and $J\subseteq [\ell]$.
Let $J_1 := \{u_1, \ldots, u_{m^{40}}\}$, and let $J_2\subseteq [\ell]$ be a maximal set such that there is no $j_1\in J_1$ and $j_2\in J_2$ with $(j_1,j_2)\in I$. 
Then, $|J_2|\ge \ell-|J_1|\cdot\sqrt{d}m^{8}\ge \sqrt{d}m^{18}$. 
Let 
$$
U:=\bigcup_{i\in J_1,  \{i,j\}\in I }V(P_{(i,j)}\setminus \{u_i,u_j\}).
$$
Using that $d \geq m^{200}$ we have that $|U|\le m^{40}\cdot \sqrt{d}m^{8}\cdot 2m\leq dm^{30}$, and hence $|W\cup U|\leq 2dm^{30}$.
Also, observe that $$|\cup_{i\in J_1}(V(S_i)\setminus U)|\ge m^{40}\cdot d/m^5-dm^{30}\ge dm^{34}\ge \log^3\left(\frac{n}{d}\right)|W\cup U|$$ and

$$|\cup_{i\in J_2}(V(S_i)\setminus U)|\ge \sqrt{d}m^{49}\cdot \sqrt{d}m^{24}-dm^{30}\ge dm^{71}\ge \log^3\left(\frac{n}{d}\right)|W\cup U|.$$
Hence, by Lemma~\ref{lem-mpath}, and that $n \geq Kd$ with $K$ sufficiently large, we can find a path connecting $\cup_{i\in J_1}(V(S_i)\setminus U)$ and $\cup_{i\in J_2}(V(S_i)\setminus U)$ which avoids $W\cup U$ and has length at most $2m$.
This contradicts the maximality of $I$.
\end{proof}

\section {Building adjusters}\label{sec-build-adj}

In order to ensure the clique subdivision we construct is balanced, we utilise special structures called \emph{adjusters}, introduced by Liu and Montgomery in \cite{lmerdoshajnal}.% These adjusters have many paths with different (consecutive) lengths between two vertices. %, allowing one to choose a path of desired length

\begin{defn}
    \sloppy{An $(\ell,k)$-adjuster $P = (v_1, v_2, F_1, F_2, z, A,\mathcal P)$ in a graph $G$ consists of two vertices $v_1, v_2$ that are the core vertices of two $(z\sqrt{d}m^6,\sqrt{d}m^{22},10m)$-units $F_1$ and $F_2$, respectively, where $|\inte(F_1) \cap \inte(F_2)| =\varnothing$, a real number $z \in [1,3]$, a vertex set $A \subseteq V(G)$ of size at most~$2\ell$
    disjoint from $\cup_{i\in[2]}V(F_i)$, and a
    collection $\mathcal P$ of $k+1$ $v_1,v_2$-paths in $G[A \cup \{v_1, v_2\}]$ of lengths $\ell', \ell'+2, \ell'+4, \ldots, \ell' + 2k$ for some $\ell' \leq \ell$.}
    Further, define $\ell(P):=\ell'+2k$ and call it the \emph{length of $P$}. We also define the \emph{perimeter} $p(P) := A\cup \inte(F_1)\cup \inte(F_2)$.
\end{defn}

Note that the condition $\ell'\le \ell$ from the definition of an $(\ell,k)$-adjuster $P$ is equivalent to $\ell(P)\le \ell+2k$ (and this will be the form in which we will be verifying it).
We say that an adjuster $P$ is `in a set $X$' to mean that $p(P)$ is a subset of $X$. Also, we may just write $P=(v_1, v_2, F_1, F_2,z, A)$ when the collection $\mathcal P$ of paths is understood. We may additionally omit $z$ if the size of the units is unimportant.\footnote{Such as at the end of the proof of Lemma~\ref{lem-precise-path}.}

\definecolor{aurometalsaurus}{rgb}{0.43, 0.5, 0.5}
\definecolor{blue(munsell)}{rgb}{0.0, 0.5, 0.69}
\definecolor{emerald}{rgb}{0.31, 0.78, 0.47}
\begin{figure}[!ht]
    \centering
\begin{tikzpicture}
    \draw[aurometalsaurus] (0,0) circle (0.5cm);
	\draw[aurometalsaurus] (2,0) circle (0.5cm);
	\draw[aurometalsaurus] (4,0) circle (0.5cm);
	\draw[aurometalsaurus] (6,0) circle (0.5cm);
	\draw[decorate, decoration=snake, segment length=4mm,aurometalsaurus] (-1.5,0)--(-0.5,0);
	\draw[decorate, decoration=snake, segment length=4mm,aurometalsaurus] (0.5,0)--(1.5,0);
	\draw[decorate, decoration=snake, segment length=4mm,aurometalsaurus] (2.5,0)--(3.5,0);
	\draw[decorate, decoration=snake, segment length=4mm,aurometalsaurus] (4.5,0)--(5.5,0);
	%%longest-path
	\draw[decorate, decoration=snake, segment length=4mm,red,densely dotted,thick] (-1.5,0)--(-0.5,0);
	\draw[decorate, decoration=snake, segment length=4mm,red,densely dotted,thick] (0.5,0)--(1.5,0);
	\draw[decorate, decoration=snake, segment length=4mm,red,densely dotted,thick] (2.5,0)--(3.5,0);
	\draw[decorate, decoration=snake, segment length=4mm,red,densely dotted,thick] (4.5,0)--(5.5,0);
	\begin{scope}
    \clip (-0.5,0) rectangle (0.5,0.5);
    \draw[red,densely dotted,thick] (0,0) circle(0.5cm);
    \end{scope}
    \begin{scope}
    \clip (2.5,0) rectangle (-1.5,0.5);
    \draw[red,densely dotted,thick] (2,0) circle(0.5cm);
    \end{scope}
    \begin{scope}
    \clip (3.5,0) rectangle (4.5,0.5);
    \draw[red,densely dotted,thick] (4,0) circle(0.5cm);
    \end{scope}
    \begin{scope}
    \clip (5.5,0) rectangle (6.5,0.5);
    \draw[red,densely dotted,thick] (6,0) circle(0.5cm);
    \end{scope}
    %%right-unit
    \node[inner sep= 1pt,blue(munsell)](v1) at (6.5,0)[circle,fill]{};
	\node[inner sep= 1pt,blue(munsell)](v1l) at (6.2,0){\small $v_2$};
	\node[inner sep= 1pt,blue(munsell)](v11) at (8.5,1)[circle,fill]{};
	\node[inner sep= 1pt,blue(munsell)](v12) at (8.5,0)[circle,fill]{};
	\node[inner sep= 1pt,blue(munsell)](v13) at (8.5,-1)[circle,fill]{};
	\draw[decorate, decoration=snake, segment length=6mm,blue(munsell)] (v1) -- (v11);
	\draw[decorate, decoration=snake, segment length=6mm,blue(munsell)] (v1) -- (v12);
	\draw[decorate, decoration=snake, segment length=6mm,blue(munsell)] (v1) -- (v13);
	\node[inner sep= 1pt,blue(munsell)](a1) at (9.3,1.3)[circle,fill]{};
	\node[inner sep= 1pt,blue(munsell)](a2) at (9.3,1.1)[circle,fill]{};
	\node[inner sep= 1pt,blue(munsell)](a3) at (9.3,0.9)[circle,fill]{};
	\node[inner sep= 1pt,blue(munsell)](a4) at (9.3,0.7)[circle,fill]{};
	\draw[blue(munsell)] (v11) -- (a1);
	\draw[blue(munsell)] (v11) -- (a2);
	\draw[blue(munsell)] (v11) -- (a3);
	\draw[blue(munsell)] (v11) -- (a4);
	\node[inner sep= 1pt,blue(munsell)](b1) at (9.3,0.3)[circle,fill]{};
	\node[inner sep= 1pt,blue(munsell)](b2) at (9.3,0.1)[circle,fill]{};
	\node[inner sep= 1pt,blue(munsell)](b3) at (9.3,-0.1)[circle,fill]{};
	\node[inner sep= 1pt,blue(munsell)](b4) at (9.3,-0.3)[circle,fill]{};
	\draw[blue(munsell)] (v12) -- (b1);
	\draw[blue(munsell)] (v12) -- (b2);
	\draw[blue(munsell)] (v12) -- (b3);
	\draw[blue(munsell)] (v12) -- (b4);
	\node[inner sep= 1pt,blue(munsell)](c1) at (9.3,-0.7)[circle,fill]{};
	\node[inner sep= 1pt,blue(munsell)](c2) at (9.3,-0.9)[circle,fill]{};
	\node[inner sep= 1pt,blue(munsell)](c3) at (9.3,-1.1)[circle,fill]{};
	\node[inner sep= 1pt,blue(munsell)](c4) at (9.3,-1.3)[circle,fill]{};
	\draw[blue(munsell)] (v13) -- (c1);
	\draw[blue(munsell)] (v13) -- (c2);
	\draw[blue(munsell)] (v13) -- (c3);
	\draw[blue(munsell)] (v13) -- (c4);
    %%left-unit
    \node[inner sep= 1pt,blue(munsell)](v2) at (-1.5,0)[circle,fill]{};
	\node[inner sep= 1pt,blue(munsell)](v2l) at (-1.45,0.3)[]{\small $v_1$};
	
	\node[inner sep= 1pt,blue(munsell)](v21) at (-3.5,1)[circle,fill]{};
	\node[inner sep= 1pt,blue(munsell)](v22) at (-3.5,0)[circle,fill]{};
	\node[inner sep= 1pt,blue(munsell)](v23) at (-3.5,-1)[circle,fill]{};
	\draw[decorate, decoration=snake, segment length=6mm,blue(munsell)] (v2) -- (v21);
	\draw[decorate, decoration=snake, segment length=6mm,blue(munsell)] (v2) -- (v22);
	\draw[decorate, decoration=snake, segment length=6mm,blue(munsell)] (v2) -- (v23);
	\node[inner sep= 1pt,blue(munsell)](d1) at (-4.3,1.3)[circle,fill]{};
	\node[inner sep= 1pt,blue(munsell)](d2) at (-4.3,1.1)[circle,fill]{};
	\node[inner sep= 1pt,blue(munsell)](d3) at (-4.3,0.9)[circle,fill]{};
	\node[inner sep= 1pt,blue(munsell)](d4) at (-4.3,0.7)[circle,fill]{};
	\draw[blue(munsell)] (v21) -- (d1);
	\draw[blue(munsell)] (v21) -- (d2);
	\draw[blue(munsell)] (v21) -- (d3);
	\draw[blue(munsell)] (v21) -- (d4);
	\node[inner sep= 1pt,blue(munsell)](e1) at (-4.3,0.3)[circle,fill]{};
	\node[inner sep= 1pt,blue(munsell)](e2) at (-4.3,0.1)[circle,fill]{};
	\node[inner sep= 1pt,blue(munsell)](e3) at (-4.3,-0.1)[circle,fill]{};
	\node[inner sep= 1pt,blue(munsell)](e4) at (-4.3,-0.3)[circle,fill]{};
	\draw[blue(munsell)] (v22) -- (e1);
	\draw[blue(munsell)] (v22) -- (e2);
	\draw[blue(munsell)] (v22) -- (e3);
	\draw[blue(munsell)] (v22) -- (e4);
	\node[inner sep= 1pt,blue(munsell)](f1) at (-4.3,-0.7)[circle,fill]{};
	\node[inner sep= 1pt,blue(munsell)](f2) at (-4.3,-0.9)[circle,fill]{};
	\node[inner sep= 1pt,blue(munsell)](f3) at (-4.3,-1.1)[circle,fill]{};
	\node[inner sep= 1pt,blue(munsell)](f4) at (-4.3,-1.3)[circle,fill]{};
	\draw[blue(munsell)] (v23) -- (f1);
	\draw[blue(munsell)] (v23) -- (f2);
	\draw[blue(munsell)] (v23) -- (f3);
	\draw[blue(munsell)] (v23) -- (f4);
	%%labelling
	\draw[rounded corners=15pt][dashed,emerald]
  (-3.8,-1.2) rectangle ++(12.7,2.4);
  \node[inner sep= 1pt,emerald](pl) at (3,-0.9){\small $p(P)$};
  \node[inner sep= 1pt,red](pl) at (3,0.7){\small $\ell(P)$};
  \node[inner sep= 1pt,blue(munsell)](f1l) at (-2.1,0.7){\small $F_1$};
  \node[inner sep= 1pt,blue(munsell)](f2l) at (7.1,0.7){\small $F_2$};
\end{tikzpicture}
\caption{An example of an adjuster $P$ we shall build, in which $A$ is a string of even cycles, joined by paths connecting at almost antipodal vertices of the cycles. The length $\ell(P)$ is found by traversing all the longest paths between the almost antipodal vertices on the cycles.}\label{fig-lkadjuster}
\end{figure}
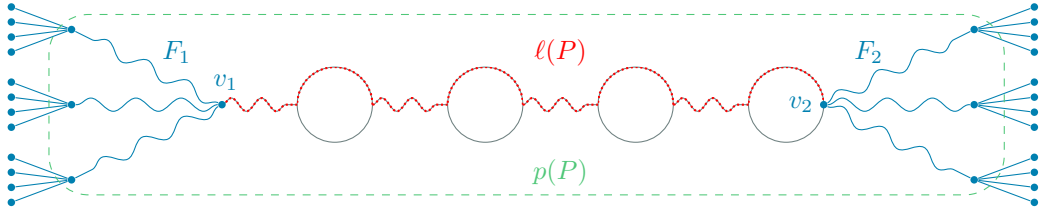

\subsection{Asymmetric bipartite parts}

To construct robustly many adjusters in our expander, we need the following result showing that a dense asymmetric bipartite subgraph is enough for finding a 1-subdivision of large clique.

\begin{prop}\label{littleprop}
Let $d\ge 40$ and suppose that a graph $G$ contains disjoint vertex sets $U$ and $W$ such that every vertex in $U$ has at least $d$ neighbours in $W$. 
Let $\ell:=\lceil d\sqrt{|U|}/8|W|\rceil$ and suppose $\ell\geq 20$. Then, $G$ contains a copy of $\tk_{\ell}^{(1)}$.
\end{prop}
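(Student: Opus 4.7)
The plan is to apply \emph{dependent random choice} (DRC) to the bipartite subgraph between $U$ and $W$: first find $\ell$ branch vertices in $U$ pairwise sharing many common neighbours in $W$, and then greedily assign $\binom{\ell}{2}$ distinct internal vertices from $W$ to produce the desired $\tk_\ell^{(1)}$.

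\textbf{Cherry count.} By double counting, the number of ordered paths of length $2$ with both endpoints in $U$ and centre in $W$ equals $\sum_{w\in W}d_U(w)(d_U(w)-1)$, where $d_U(w):=|N(w)\cap U|$. Since $\sum_{w\in W}d_U(w)\ge d|U|$, and since the hypotheses $d\ge 40$ and $\ell\ge 20$ together give $d|U|/|W|\ge 160$, Jensen's inequality yields a lower bound of order $d^2|U|^2/|W|$ on this quantity.

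\textbf{DRC step.} Sample $t$ vertices $w_1,\ldots,w_t\in W$ uniformly and independently (with replacement), and set $T:=\bigcap_{i=1}^{t}N(w_i)\cap U$. Since $d_W(u)\ge d$ for each $u\in U$, one has $\bE[|T|]\ge |U|(d/|W|)^t$. Call a pair $\{u,u'\}\in\binom{U}{2}$ \emph{bad} if $|N(u)\cap N(u')\cap W|<\binom{\ell}{2}$; the expected number of bad pairs contained in $T$ is at most $\binom{|U|}{2}(\binom{\ell}{2}/|W|)^t$. Choose $t$ so that $\bE[|T|]$ exceeds the expected number of bad pairs in $T$ by at least $\ell$ (possible roughly when $d>\binom{\ell}{2}$); a favourable realisation then yields, after deleting one endpoint from each bad pair, a set $T'\subseteq U$ with $|T'|\ge\ell$ in which every pair has at least $\binom{\ell}{2}$ common neighbours in $W$.

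\textbf{Assembly and obstacle.} Fix any $\ell$ vertices $v_1,\ldots,v_\ell\in T'$ as branch vertices, and process the pairs $\{v_i,v_j\}$ in an arbitrary order, greedily selecting an unused common neighbour $w_{ij}\in N(v_i)\cap N(v_j)\cap W$ as the internal vertex for this pair. At each stage at most $\binom{\ell}{2}-1$ internal vertices have been used, so by the defining property of $T'$ an unused common neighbour is always available, producing the desired $\tk_\ell^{(1)}$. The main obstacle is guaranteeing that a valid DRC parameter $t$ exists: the condition $d>\binom{\ell}{2}$, equivalent to $\ell\lesssim\sqrt{2d}$, translates via $\ell=\lceil d\sqrt{|U|}/(8|W|)\rceil$ into $|U|\lesssim |W|^2/d$. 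In the complementary very asymmetric regime $|U|\gg |W|^2/d$, one should instead swap the roles of $U$ and $W$: the $\ell$ branches are placed in $W$ (one checks $|W|\ge\ell$ holds in this regime) and the $\binom{\ell}{2}$ internals in $U$, using the analogous cherry bound $\sum_{u\in U}\binom{d_W(u)}{2}\ge|U|\binom{d}{2}$ and a symmetric DRC argument.
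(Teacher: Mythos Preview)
Your route is genuinely different from the paper's, and the obstacle you flag is not closed by the ``swap'' you propose. The paper places the branch vertices in $W$ from the outset and does \emph{not} use dependent random choice. After reducing to $|U|\le 16|W|^2$, it samples a random subset $W'\subseteq W$, including each vertex independently with probability $p=\sqrt{|U|}/(4|W|)$, and uses Chernoff bounds to ensure that with positive probability $|W'|\le 2p|W|$ while the set $U'$ of vertices of $U$ retaining at least $pd/2=\ell$ neighbours in $W'$ satisfies $|U'|\ge |U|/4\ge |W'|^2$. A one-line maximality argument then finishes: take a maximal system of distinct vertices $v_{\{x,y\}}\in U'$ indexed by pairs $\{x,y\}\subseteq W'$ with $x,y\in N(v_{\{x,y\}})$; since $|U'|>\binom{|W'|}{2}$ some $u\in U'$ is unused, and by maximality every pair inside $A:=N(u,W')$ (which has $|A|\ge\ell$) is already covered, yielding the $\tk_\ell^{(1)}$ with branch set $A$ and internal vertices in~$U'$.

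Regarding your proposal: the primary DRC needs $d>\binom{\ell}{2}$, which together with $\ell\ge 20$ already forces $d>190$ and confines $|U|$ to a narrow band; essentially the whole range falls into your ``swapped'' regime. But the swap is not symmetric: the degree hypothesis is on $U$, so sampling $t$ random vertices from $U$ gives only $\bE[|T|]\ge |W|(d/|W|)^t$ via Jensen. To kill bad pairs one needs roughly $\big(\binom{\ell}{2}|W|/(d|U|)\big)^t\lesssim 1/|W|$, and since $\binom{\ell}{2}|W|/(d|U|)\approx d/(128|W|)$ this forces $t\approx \log|W|/\log(128|W|/d)$; for $d\ll |W|$ the good term $|W|(d/|W|)^t$ then drops below $\ell$. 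So the swapped DRC does not obviously succeed in the very regime it was meant to cover, and the greedy assembly step never gets off the ground. The paper's sparsification-plus-maximality argument sidesteps this difficulty entirely and delivers the stated constant directly.
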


\begin{proof}
If $|U|>16|W|^2$, then replace $U$ by a subset of size exactly $16|W|^2$; the new $\ell=\lceil d/2\rceil$ is still at least $20$.

Let $p:= \sqrt{|U|}/4|W|\le 1$, so that $\lceil pd/2\rceil=\ell$, and hence $pd/12\geq 2$. 
Let $W'\subseteq W$ be a subset formed by choosing each element of $W$ independently at random with probability $p$.
Set $X_u:=d_G(u,W')=\sum_{w\in N(u,W)} \mathds{1}_{\{w\in W'\}},$ which is a sum of independent Bernoulli random variables of parameter $p$ with expectation $\bE[X_u]\geq pd$.
Then, using a standard lower-tail Chernoff bound (e.g.\ \cite[Theorem 4.5]{MU05}) gives that, for each $u\in U$,
\[
\P(X_u< pd/2)\leq \exp(-pd/12)\leq 1/4,
\] using that $pd/12 \geq 2$.
Therefore, letting $U':=\{u\in U:X_u\geq pd/2\}$, it is clear that we have $\bE\left[\,|U'|\,\right]=|U|\cdot\P(\,X_u\geq pd/2\,)\geq 3|U|/4$. 
The last inequality (combined with $|U'| \leq |U|$)  gives that $$\bP(\,|U'| < |U|/4\,) \leq 1/2.$$
Now, we have $|W|\geq d$, and hence $\bE\left[\,|W'|\,\right]=p|W|\geq 24$. Thus, using an upper-tail Chernoff bound (e.g.\ \cite[Theorem 4.4]{MU05}), we have that 
$$\P(\,|W'|>2p|W|\,)\leq \exp(-p|W|/3)\leq 1/4.$$
And so we have that $\P(\,|W'|>2p|W|\,)+\P(\,|U'|<|U|/4\,)<1$.
Therefore, there is some choice of $W'$ for which $|U'|\geq |U|/4$ and $|W'|\leq 2p|W|$. 

Take a maximal set of pairs $I\subseteq \binom{W'}{2}$ for which there is a set of distinct vertices $v_{\{x,y\}}$ in $U'$, $\{x,y\}\in I$, such that $x,y\in N(v_{\{x,y\}})$ for each $\{x,y\}\in I$.\vspace{0.08cm} 
Noting that $16p^2|W|^2=|U|$, we have $|U'|\geq |U|/4\geq 4p^2|W|^2\geq |W'|^2$.
Thus, there is some $u\in U'\setminus \{v_{\{x,y\}}:\{x,y\}\in I\}$.
Let $A:=N(u,W')$. Then $|A|\geq \lceil pd/2\rceil= \ell$.
Moreover, $\binom{A}{2}\subseteq I$, by the maximality of $I$.
Note that $A\cup \{v_{\{x,y\}}:\{x,y\}\in \binom{A}{2}\}$ is the vertex set of a copy of $\tk_{\ell}^{(1)}$ in $G$ with core vertices those in $A$ and edge set $\{xv_{\{x,y\}},yv_{\{x,y\}}:\{x,y\}\in \binom{A}{2}\}$.
\end{proof}

%%%%%%%%%%%%%%%%%%%%%%%%%%%%%%%%%%%%%%

\subsection{Constructing an adjuster}\label{sec-simple-adj}

\begin{lemma}\label{lem-router-dense-max}
For each $0<\ep_1,\ep_2<1$,  the following holds for all sufficiently large $K$.
For $d > 0$, let $G$ be an $n$-vertex bipartite $(\ep_1,\ep_2d)$-expander with $\delta(G)\ge d$ and $n\ge Kd$. 
Suppose $d\ge m^{200}$ and $G$ contains no copy of  $\tk_{\sqrt{d}m}^{(1)}$.
Let $W\subseteq V(G)$ satisfy $|W|\leq dm^{11}$.
Then, there exists a $(50m,1)$-adjuster $P=(u_1,u_2,F_1,F_2,3,A,\mathcal{P})$ in $G-W$. Moreover, $|A| \leq 2\ell(P)$.
\end{lemma}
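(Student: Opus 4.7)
The plan is to construct the $(50m,1)$-adjuster in three stages: (i) use Lemma~\ref{lem-unit} twice to obtain two vertex-disjoint $(3\sqrt{d}m^6,\sqrt{d}m^{22},10m)$-units $F_1,F_2$ with cores $v_1,v_2$; (ii) use Lemma~\ref{lem-mpath} to find a first $v_1,v_2$-path $P^{(0)}$ of length $\ell'\leq m+2$ whose internal vertices avoid $W\cup V(F_1)\cup V(F_2)$; and (iii) find a length-$3$ ``detour'' at some edge of $P^{(0)}$ to produce a second $v_1,v_2$-path $P^{(2)}$ of length $\ell'+2$.

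For stage (i), a $(4\sqrt{d}m^6,\sqrt{d}m^{22},10m)$-unit uses at most $4\sqrt{d}m^6(\sqrt{d}m^{22}+10m)\leq 5dm^{28}$ vertices, so applying Lemma~\ref{lem-unit} first to $G$ with forbidden set $W$ (of size $\leq dm^{11}\leq dm^{30}$) and then to $G$ with forbidden set $W\cup V(F_1)$ (of size $\leq 6dm^{28}\leq dm^{30}$) yields disjoint units $F_1,F_2$ with cores $v_1,v_2$; discarding $\sqrt{d}m^6$ stars from each trims them to the required type. For stage (ii), each core $v_i$ has at most $\sqrt{d}m^6$ direct-path neighbours inside its own unit; assuming $v_i$ retains at least $d/2$ ``free'' neighbours outside $W\cup V(F_1)\cup V(F_2)$ (which follows from the no-$\tk_{\sqrt{d}m}^{(1)}$ hypothesis via Proposition~\ref{littleprop}, as otherwise the entire neighbourhood of $v_i$ would sit in the small forbidden set and produce a $\tk_{\sqrt{d}m}^{(1)}$ already), apply Lemma~\ref{lem-mpath} with $A$ and $B$ being these free neighbourhoods of $v_1$ and $v_2$ respectively, and forbidden set $(W\cup V(F_1)\cup V(F_2))\setminus\{v_1,v_2\}$. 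This yields a path of length at most $(100/\ep_1)\log^3(15n/\ep_2d)\leq m$ (using $\log(n/d)\leq m^{1/4}$) which, together with the edges from $v_1$ and to $v_2$, gives $P^{(0)}$ of length $\ell'\leq m+2\leq 50m-2$.

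The main obstacle is stage (iii). We seek an edge $uv$ of $P^{(0)}$ (in its middle portion) together with new vertices $y_1\in N_G(u)\setminus F_{\text{forb}}$ and $y_2\in N_G(v)\setminus F_{\text{forb}}$ with $y_1y_2\in E(G)$, where $F_{\text{forb}}:=W\cup V(F_1)\cup V(F_2)\cup V(P^{(0)})$; replacing $uv$ by $u y_1 y_2 v$ then gives $P^{(2)}$ of length $\ell'+2$. The existence of such a detour is forced by the no-$\tk_{\sqrt{d}m}^{(1)}$ hypothesis. Indeed, if no detour existed anywhere on $P^{(0)}$, then for each edge $uv\in E(P^{(0)})$ the bipartite graph between $N_G(u)\setminus F_{\text{forb}}$ and $N_G(v)\setminus F_{\text{forb}}$ would be empty. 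Collecting these empty-slice conditions across the edges of $P^{(0)}$ would produce a large set $U$ of vertices each of which has at least $d/2$ neighbours trapped inside a small set $W'\subseteq F_{\text{forb}}$ of size at most $10dm^{28}$, with $|U|/|W'|^2\geq 64m^2/d$; Proposition~\ref{littleprop} applied to $(U,W')$ would then yield a $\tk_{\sqrt{d}m}^{(1)}$, contradicting our hypothesis.

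Finally, setting $\mathcal{P}=\{P^{(0)},P^{(2)}\}$ and $A=(V(P^{(0)})\cup\{y_1,y_2\})\setminus\{v_1,v_2\}$ gives $|A|\leq\ell'+3\leq 2(\ell'+2)=2\ell(P)$ and $\ell(P)=\ell'+2\leq 50m$, producing the desired $(50m,1)$-adjuster $P=(v_1,v_2,F_1,F_2,3,A,\mathcal{P})$.
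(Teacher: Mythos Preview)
Your proposal has a genuine gap at Stage~(ii) that cannot be repaired within your framework.

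You want to connect $v_1$ to $v_2$ by a path whose internal vertices avoid $W\cup V(F_1)\cup V(F_2)$. But each unit has $|V(F_i)|\ge 3\sqrt{d}m^6\cdot\sqrt{d}m^{22}=3dm^{28}$ vertices, so your forbidden set has size of order $dm^{28}$. To invoke Lemma~\ref{lem-mpath} you need the forbidden set to be at most $\ep_1 x/(4\log^2(15n/\ep_2 d))$, where $x$ lower-bounds the sizes of your two starting sets. Taking $A,B$ to be the ``free'' neighbourhoods of $v_1,v_2$ gives $x\le d$, and then the constraint would force the forbidden set to have fewer than $d$ vertices --- off by a factor of $m^{28}$. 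Moreover, your justification that each $v_i$ retains $d/2$ free neighbours via Proposition~\ref{littleprop} is incorrect: with $U=\{v_i\}$ and $|W'|\sim dm^{28}$ the proposition gives $\ell=\lceil d/(16|W'|)\rceil=1$, which neither meets the hypothesis $\ell\ge 20$ nor yields any $\tk_{\sqrt{d}m}^{(1)}$.

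Stage~(iii) inherits the same scale problem and is also not correct as written: the ``no detour at $uv$'' condition says only that $N(y_1)\cap N(v)\subseteq F_{\text{forb}}$ for each $y_1\in N(u)\setminus F_{\text{forb}}$, which does not force $y_1$ to have $d/2$ neighbours inside $F_{\text{forb}}$; and your asserted inequality $|U|/|W'|^2\ge 64m^2/d$ does not follow for any plausible choice of $U$ when $|W'|\sim dm^{28}$.

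The paper resolves this by never trying to avoid $V(F_i)$. Instead it routes the two $v_1,v_2$-paths \emph{through} the units: it connects $\bd(F_1)$ to $\bd(F_2)$ (sets of size $\ge dm^{28}$) by many short paths while avoiding only $W\cup\bigcup_i\inte(F_i)$ (of size $\le 2dm^{11}$), finds the length-$2$ gadget among the endpoints in $\bd(F_1)$ (using a third unit $F_3$ in the hard case, with Proposition~\ref{littleprop} applied to a \emph{large} set $B_0$ rather than a single vertex), and then \emph{trims} the units by deleting the few internal paths that were used, so that the trimmed $F_1',F_2'$ are disjoint from the resulting set $A$. This is the missing idea: the adjuster's set $A$ is allowed to eat into the original units, and the units in the output are strict sub-units of the ones you built.
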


\begin{proof}  
By Lemma~\ref{lem-unit}, since $|W|\leq dm^{11}\leq dm^{30}$, we can find a $(3\sqrt{d}m^6+2,\sqrt{d}m^{22},10m)$-unit $F_1$ in $G-W$.
Now, set $
%W_{\ref{lem-unit}}=
W_0:=W\cup V(F_1)$ and use again Lemma~\ref{lem-unit} to find a $(3\sqrt{d}m^6+2,\sqrt{d}m^{22},10m)$-unit $F_2$ in $G-W_0$. 
Repeat the process one more time to find a $(4\sqrt{d}m^6,\sqrt{d}m^{22},10m)$-unit $F_3$ in $G-W_1$, with $W_1:=W_0\cup V(F_2)$. 
This can be done, since $|W_0|\leq|W|+|V(F_1)|\leq dm^{11}+40dm^{29}\leq dm^{30}$ and, similarly, $|W_1|\leq dm^{11}+80dm^{29}\leq dm^{30}$. 

Set $W':=W\cup \inte(F_1)\cup \inte(F_2)\cup \inte(F_3)$ and denote the core vertices of the units $F_1,F_2,F_3$ by $v_1,v_2,v_3$ respectively.
%\footnote{OP: I think these paths should also avoid boundaries of the units. 
%Our later use of the found adjuster $P=(u_1,u_2,...)$ is that we first find some incoming paths, say coming to $P$ at some boundary points $w_1$ and $w_2$. Then we connect $w_i$ to the core vertex $u_i$ by the path in the unit. Finally, we have to connect $u_1$ and $u_2$ by a path $Q$ of the desired length. If $A$ can intersect the boundaries, how do we know that $Q$ does not contain $w_1$ or $w_2$?}

Note that $|W'|\leq 2dm^{11}$. As $|\bd(F_1)|,|\bd(F_2)|\geq dm^{28}$, and recalling that $n \geq Kd$ for sufficiently large $K$ implies $n \geq dm^{200}$, iteratively applying Lemma~\ref{lem-mpath} we can find in $G-W'$ a collection of $dm^{26}$ pairwise vertex disjoint $\bd(F_1),\bd(F_2)$-paths $\mathcal{P}'$, each of length at most $m$. 
By averaging, there exists a subcollection $\cP\subseteq \cP'$ of $dm^{25}$ paths of equal length.
Let $B:=V(\cP)\cap \bd(F_1)$ be the set of endpoints of $\cP$ in $\bd(F_1)$, so that $|B|=dm^{25}$.

Suppose first that there is some vertex $w\notin W'$ with two neighbours $w_1,w_2$ in $B$. Let $P_1,P_2\in \cP$ be the paths that $w_i$ is an endvertex of $P_i$ for each $i\in [2]$ (see Figure~\ref{fig-build-units-v1-v2}).
Denote the paths in $F_1$ joining~$v_1$ with $w_1$ and $w_2$ by $Q_1$ and $Q_2$, respectively.
Denote the paths in $F_2$ joining $P_1\cap\bd(F_2)$ and $P_2\cap\bd(F_2)$ with $v_2$ as $R_1$ and $R_2$, respectively. (Note that these paths may meet before reaching the core vertex.)
By symmetry, we can assume that $w\notin P_2$. 
Then, $v_1Q_1P_1R_1v_2$ and $v_1Q_1ww_2P_2R_2v_2$ are two $v_1,v_2$-paths whose lengths differ by two and are at most 
$$
e(Q_1)+2+e(P_2)+e(R_1)\le 
%(10m-1)+2+m+(10m-1)\le 
50m+2.
$$
Let $F_1'$ be the $(3\sqrt{d}m^6,\sqrt{d}m^{22},10m)$-unit with core vertex $v_1$ constructed from $F_1$ by removing the paths $Q_1$ and $Q_2$ and the leaves of the stars incident to $Q_1$ and $Q_2$. Similarly, let $F_2'$ be the $(3\sqrt{d}m^6,\sqrt{d}m^{22},10m)$-unit with core vertex $v_2$ constructed from $F_2$ by removing the paths $R_1$ and $R_2$ and the leaves of the stars incident to $R_1$ and $R_2$. If $Q_1$ and $Q_2$ (resp.\ $R_1$ and $R_2$) only differ by an edge, then remove in addition an arbitrary path and its leaves from $F_1$ (resp.\ $F_2$) to construct $F_1'$ (resp.\ $F_2'$). Let
%\footnote{OP: I removed $v_1,v_2$ as our def requires $A$ to be disjoint from units.} 
$$
A':=(V(P_1)\cup V(P_2)\cup\{w\}\cup V(Q_1)\cup V(R_1)\cup V(R_2))\setminus\{v_1,v_2\}.
$$
Note that $|A'|\le 2|P_2|+1+30m\le  2\cdot 50m$.
Therefore, 
$
(v_1,v_2,F_1',F_2',3,A')
$ 
is a $(50m,1)$-adjuster in $G-W$. Moreover,
$$
|A'|\le |P_1|+|P_2|+1+|Q_1|-2+|R_1\cup R_2|-3\le 2(|Q_1|+|P_2|+|R_1|-1)= 2\ell(P), $$
so the additional property also holds.
%Also, its length is $20m+|P_2|+3\ge |A'|/2$, as required.

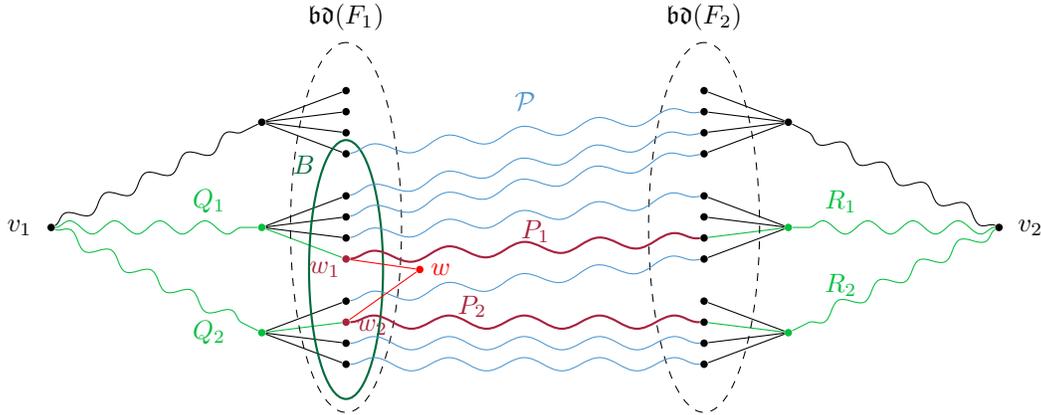
\begin{figure}[!ht]
	\centering
	\begin{tikzpicture}[scale=1.4]
	%unit1
	\node[inner sep= 1pt](v1) at (0,3)[circle,fill]{};
	\node[inner sep= 1pt](v1l) at (-0.3,3)[]{\small $v_1$};
	\node[inner sep= 1pt](v11) at (2,4)[circle,fill]{};
	\node[inner sep= 1pt,darkpastelgreen](v12) at (2,3)[circle,fill]{};
	\node[inner sep= 1pt,darkpastelgreen](v13) at (2,2)[circle,fill]{};
	\node[inner sep= 1pt,darkpastelgreen](q1l) at (1.5,3.25)[]{\small $Q_1$};
	\node[inner sep= 1pt,darkpastelgreen](q2l) at (1.5,2)[]{\small $Q_2$};
	\draw[decorate, decoration=snake, segment length=6mm,black] (v1) -- (v11);
	\draw[decorate, decoration=snake, segment length=6mm,darkpastelgreen] (v1) -- (v12);
	\draw[decorate, decoration=snake, segment length=6mm,darkpastelgreen] (v1) -- (v13);
	\node[inner sep= 1pt](a1) at (2.8,4.3)[circle,fill]{};
	\node[inner sep= 1pt](a2) at (2.8,4.1)[circle,fill]{};
	\node[inner sep= 1pt](a3) at (2.8,3.9)[circle,fill]{};
	\node[inner sep= 1pt](a4) at (2.8,3.7)[circle,fill]{};
	\draw (v11) -- (a1);
	\draw (v11) -- (a2);
	\draw (v11) -- (a3);
	\draw (v11) -- (a4);
	\node[inner sep= 1pt](b1) at (2.8,3.3)[circle,fill]{};
	\node[inner sep= 1pt](b2) at (2.8,3.1)[circle,fill]{};
	\node[inner sep= 1pt](b3) at (2.8,2.9)[circle,fill]{};
	\node[inner sep= 1pt,deepcarmine](b4) at (2.8,2.7)[circle,fill]{};
	\node[inner sep= 1pt,deepcarmine](b4l) at (2.6,2.6)[]{\small $w_1$};
	\draw (v12) -- (b1);
	\draw (v12) -- (b2);
	\draw (v12) -- (b3);
	\draw[darkpastelgreen] (v12) -- (b4);
	\node[inner sep= 1pt](c1) at (2.8,2.3)[circle,fill]{};
	\node[inner sep= 1pt,deepcarmine](c2) at (2.8,2.1)[circle,fill]{};
	\node[inner sep= 1pt,deepcarmine](c2l) at (3.05,2.05)[]{\small $w_2$};
	\node[inner sep= 1pt](c3) at (2.8,1.9)[circle,fill]{};
	\node[inner sep= 1pt](c4) at (2.8,1.7)[circle,fill]{};
	\draw (v13) -- (c1);
	\draw[darkpastelgreen] (v13) -- (c2);
	\draw (v13) -- (c3);
	\draw (v13) -- (c4);
	\draw[rotate=90][dashed] (3,-2.8) ellipse (50pt and 15pt);
	\draw[rotate=90,cadmiumgreen,thick] (2.6,-2.8) ellipse (35pt and 10pt);
	\node[inner sep= 1pt,cadmiumgreen](Al) at (2.4,3.6)[]{\small $B$};
	%unit2	
	\node[inner sep= 1pt](v2) at (9,3)[circle,fill]{};
	\node[inner sep= 1pt](v2l) at (9.3,3)[]{\small $v_2$};
	\node[inner sep= 1pt](v21) at (7,4)[circle,fill]{};
	\node[inner sep= 1pt,darkpastelgreen](v22) at (7,3)[circle,fill]{};
	\node[inner sep= 1pt,darkpastelgreen](v23) at (7,2)[circle,fill]{};
	\node[inner sep= 1pt,darkpastelgreen](r1l) at (7.5,3.25)[]{\small $R_1$};
	\node[inner sep= 1pt,darkpastelgreen](r2l) at (7.5,2.45)[]{\small $R_2$};
	\draw[decorate, decoration=snake, segment length=6mm] (v2) -- (v21);
	\draw[decorate, decoration=snake, segment length=6mm,darkpastelgreen] (v2) -- (v22);
	\draw[decorate, decoration=snake, segment length=6mm,darkpastelgreen] (v2) -- (v23);
	\node[inner sep= 1pt](d1) at (6.2,4.3)[circle,fill]{};
	\node[inner sep= 1pt](d2) at (6.2,4.1)[circle,fill]{};
	\node[inner sep= 1pt](d3) at (6.2,3.9)[circle,fill]{};
	\node[inner sep= 1pt](d4) at (6.2,3.7)[circle,fill]{};
	\draw (v21) -- (d1);
	\draw (v21) -- (d2);
	\draw (v21) -- (d3);
	\draw (v21) -- (d4);
	\node[inner sep= 1pt](e1) at (6.2,3.3)[circle,fill]{};
	\node[inner sep= 1pt](e2) at (6.2,3.1)[circle,fill]{};
	\node[inner sep= 1pt](e3) at (6.2,2.9)[circle,fill]{};
	\node[inner sep= 1pt](e4) at (6.2,2.7)[circle,fill]{};
	\draw (v22) -- (e1);
	\draw (v22) -- (e2);
	\draw[darkpastelgreen] (v22) -- (e3);
	\draw (v22) -- (e4);
	\node[inner sep= 1pt](f1) at (6.2,2.3)[circle,fill]{};
	\node[inner sep= 1pt](f2) at (6.2,2.1)[circle,fill]{};
	\node[inner sep= 1pt](f3) at (6.2,1.9)[circle,fill]{};
	\node[inner sep= 1pt](f4) at (6.2,1.7)[circle,fill]{};
	\draw (v23) -- (f1);
	\draw[darkpastelgreen] (v23) -- (f2);
	\draw (v23) -- (f3);
	\draw (v23) -- (f4);
	\draw[rotate=90][dashed] (3,-6.2) ellipse (50pt and 15pt);
	%paths joining boundaries of units
	\draw[decorate, decoration=snake, segment length=10mm,celestialblue] (a4) -- (d2);
	\draw[decorate, decoration=snake, segment length=10mm,celestialblue] (b1) -- (d3);
	\draw[decorate, decoration=snake, segment length=10mm,celestialblue] (b2) -- (d4);
	\draw[decorate, decoration=snake, segment length=10mm,celestialblue] (b3) -- (e1);
	\draw[decorate, decoration=snake, segment length=10mm,deepcarmine,thick] (b4) -- (e3);
	\draw[decorate, decoration=snake, segment length=10mm,celestialblue] (c1) -- (e4);
	\draw[decorate, decoration=snake, segment length=10mm,deepcarmine,thick] (c2) -- (f2);
	\draw[decorate, decoration=snake, segment length=10mm,celestialblue] (c3) -- (f3);
	\draw[decorate, decoration=snake, segment length=10mm,celestialblue] (c4) -- (f4);
	\node[inner sep= 1pt,celestialblue](P) at (4.5,4.2)[]{\small $\mathcal{P}$};
	\node[inner sep= 1pt,red](w) at (3.5,2.6)[circle,fill]{};
	\node[inner sep= 1pt,red](wl) at (3.7,2.6)[]{\small $w$};
	\draw[red] (w) -- (b4);
	\draw[red] (w) -- (c2);
	\node[inner sep= 1pt,deepcarmine](P1l) at (4.6,2.96)[]{\small $P_1$};
	\node[inner sep= 1pt,deepcarmine](P2l) at (4,2.25)[]{\small $P_2$};
	\node[inner sep= 1pt,black](bd1l) at (2.8,5)[]{\small $\bd(F_1)$};
	\node[inner sep= 1pt,black](bd2l) at (6.2,5)[]{\small $\bd(F_2)$};
	\end{tikzpicture}
	\caption{An illustration of the first case in the proof of Lemma~\ref{lem-router-dense-max}.}\label{fig-build-units-v1-v2}
\end{figure}

Suppose then there is no such vertex.
Let $B_0$ be the set of vertices in $B$ with at least $d/2$ neighbours in $W'$.
As $G$ is $\tk_{\sqrt{d}m}^{(1)}$-free and $\sqrt{d}m \geq 20$, by Proposition \ref{littleprop}, we have % either $(d/2)\sqrt{|B_0|}/8|W'|\leq \sqrt{d}m$, or
\[
\sqrt{d}m\geq \frac{(d/2)\sqrt{|B_0|}}{8|W'|}\geq \frac{\sqrt{|B_0|}}{32m^{11}},
\]
and hence $|B_0|\leq 2^{10}dm^{24}\leq |B|/3$. 
Let $B':=B\setminus B_0$, so that $|B'|\geq 2|B|/3$ and each vertex in $B'$ has at most $d/2$ neighbours in $W'$.

\sloppypar{Now, remove from $B'$ at most $3\sqrt{d}m^6 + 2$ vertices to ensure for each $x \in \inte(F_1)$ either $|N_G(x)\cap~\hspace{-0.15cm}B'|~=~0$ or $|N_G(x)\cap B'| \geq 2$.} The new set $B'$ satisfies $|B'| \geq |B|/2$. For each $v\in B'$, let $P_v\in \cP$ be the path with $v$ as an endvertex. 
For each $v\in B'$, remove any edges between $v$ and $V(P_v)$ in $G$, excluding the edge emanating from $v$ in $P_v$. Call the resulting graph $G'$.
Note that we have removed at most $|B'|\,m$ edges. 
There are thus at least $|B'|\cdot (d/2)-|B'|\,m\geq |B'|\,d/8$ edges from $B'$ to $V(G)\setminus (B'\cup W')$ in $G'$. (Note that there are no edges inside~$B'$ as our graph is bipartite.)
% Recall that we deleted any edges between $v\in A'$ and $P_v$ from $G$ when forming $G'$.
Now, by construction of $G'$ and that no vertex $w\notin W'$ has at least two neighbours in $B'$ in $G'$, we get $|N_{G'}(B',V(G)\setminus W')|\geq |B'|\,d/8\geq |B|\,d/16= d^2m^{25}/16$. 

Let $C:=N_{G'}(B',V(G)\setminus W')$.
Let $W'':= W'\cup (\cup_{v\in B'}V(P_v))$, noting at this point that $|W''|\leq 2dm^{11}+dm^{25}\cdot m\leq 2dm^{26}$. 
Next, we apply Lemma~\ref{lem-mpath} to connect $C$ and $\bd(F_3)$, the former of size at least $d^2m^{25}/16$ and the latter of size at least $2dm^{28}$, with a path $P$ of length at most $m$ avoiding $W''$, as $d \geq m^{200}$. 
Let $u\in N_{G'}(B')$ be the endvertex of $P$ in $C$,  $w_1$ be a neighbour of $u$ in $B'$ and $x$ be the neighbour of $w_1$ in $\inte(F_1)$. 
Let $Q_3$ be the path in $F_3$ between $v_3$ and $P \cap \bd(F_3)$.
Fix another path $P' \in \mathcal{P}$ which has an endvertex $w_2$ in $(\bd(F_1) \cap N(x))\setminus \{w_1\}$. That is, $P' = P_{w_2}$.
As $P$ avoids $W''$, $P$ is disjoint from $P_{w_1}$ and $P_{w_2}$.
%We deleted any edges between $v$ and $P_v$ from $G$ when forming $G'$. Thus, we have that $u\notin V(P_v)$. Thus, as $P$ has no internal vertices in $W''$, $P$ and $P_v$ are vertex-disjoint.
%
%Now, $\cP$ is a set of $dm^{14}$ vertex disjoint paths, so certainly there is some $P'\in \cP\setminus \{P_v\}$ noting containing any vertex in $P$. 
Let $Q_4,Q_5$ be the paths in $F_2$ joining $P_{w_1}$ and $P_{w_2}$, respectively, with $v_2$.
Then
	$v_3Q_3Pw_1P_{w_1}Q_4v_2$ and
	$v_3Q_3Pw_1xw_2P_{w_2}Q_5v_2$
	are two $v_3,v_2$-paths whose lengths differ by two and are at most 
	$$
	e(Q_3)+e(P)+3+e(P_{w_2})+e(Q_5)\le 20m+|P|+|P_{w_2}|+5\le 50m+2.
	$$
	Observe that $|(V(P_{w_1}) \cup V(P_{w_2})) \cap \bd(F_3)| \leq 2m \leq \sqrt{d}m^5$. Hence, we can construct an $(3\sqrt{d}m^6,\sqrt{d}m^{22},10m)$-unit $F_3'$, with core vertex $v_3$, from $F_3$ by removing all $\{v_3\}, (V(P_{w_1}) \cup V(P_{w_2})) \cap \bd(F_3)$-paths (in $F_3$) and leaves of the stars incident to these paths, as well as removing the path $Q_3$ and the leaves of the star incident to $Q_1$. Let $F_2'$ be the $(3\sqrt{d}m^6,\sqrt{d}m^{22},10m)$-unit with core vertex $v_2$ constructed from $F_2$ by removing the paths $Q_4$ and $Q_5$ and the leaves of the stars incident to $Q_4$ and $Q_5$. (If $Q_4$ and $Q_5$ only differ by an edge, then remove in addition an arbitrary path and its leaves from $F_2$ to construct $F_2'$.) Let 
	$$
	A^*:=(V(P) \cup V(P_{w_1}) \cup \{x\} \cup V(P_{w_2})\cup V(Q_3)\cup V(Q_4)\cup V(Q_5))
	\setminus\{v_2,v_3\}.
	$$
	We clearly have $|A^*|\le 2\cdot 50m$.
	Therefore, 
	$$
	P^* = (v_3,v_2,F_3',F_2',3,A^*)
	$$ 
	is a $(50m,1)$-adjuster in $G-W$. Also, it is routine to check that $|A^*|\le 2\ell(P^*)$, finishing the proof of the lemma.
%	By construction, we also have $|A^*| \leq 2\ell(P^*)$, as required.
\end{proof}

\definecolor{applegreen}{rgb}{0.55, 0.71, 0.0}
\definecolor{darkspringgreen}{rgb}{0.09, 0.45, 0.27}
\definecolor{darktangerine}{rgb}{1.0, 0.66, 0.07}
\definecolor{darkcyan}{rgb}{0.0, 0.55, 0.55}
\definecolor{darkcoral}{rgb}{0.8, 0.36, 0.27}
\definecolor{babyblue}{rgb}{0.54, 0.81, 0.94}
\definecolor{darkcerulean}{rgb}{0.03, 0.27, 0.49}

\begin{figure}[!ht]
	\centering
	\begin{tikzpicture}[scale=1.4]
	%unit1
	\node[inner sep= 1pt](v1) at (0,3)[circle,fill]{};
	\node[inner sep= 1pt](v1l) at (-0.3,3)[]{\small $v_1$};
	\node[inner sep= 1pt](v11) at (2,4)[circle,fill]{};
	\node[inner sep= 1pt,red](v12) at (2,3)[circle,fill]{};
	\node[inner sep= 1pt,black](v13) at (2,2)[circle,fill]{};
	\node[inner sep= 1pt,red](xl) at (1.9,3.2)[]{\small $x$};
	%\node[inner sep= 1pt,darkpastelgreen](q2l) at (1.5,2)[]{\small $Q_2$};
	\draw[decorate, decoration=snake, segment length=6mm,black] (v1) -- (v11);
	\draw[decorate, decoration=snake, segment length=6mm,black] (v1) -- (v12);
	\draw[decorate, decoration=snake, segment length=6mm,black] (v1) -- (v13);
	\node[inner sep= 1pt](a1) at (2.8,4.3)[circle,fill]{};
	\node[inner sep= 1pt](a2) at (2.8,4.1)[circle,fill]{};
	\node[inner sep= 1pt](a3) at (2.8,3.9)[circle,fill]{};
	\node[inner sep= 1pt](a4) at (2.8,3.7)[circle,fill]{};
	\draw (v11) -- (a1);
	\draw (v11) -- (a2);
	\draw (v11) -- (a3);
	\draw (v11) -- (a4);
	\node[inner sep= 1pt,red](b1) at (2.8,3.3)[circle,fill]{};
	\node[inner sep= 1pt,red](b1l) at (2.8,3.5)[]{\small $w_2$};
	\node[inner sep= 1pt](b2) at (2.8,3.1)[circle,fill]{};
	\node[inner sep= 1pt](b3) at (2.8,2.9)[circle,fill]{};
	\node[inner sep= 1pt,red](b4) at (2.8,2.7)[circle,fill]{};
	\node[inner sep= 1pt,red](b4l) at (2.8,2.5)[]{\small $w_1$};
	\draw (v12) -- (b1);
	\draw (v12) -- (b2);
	\draw (v12) -- (b3);
	\draw (v12) -- (b4);
	\node[inner sep= 1pt](c1) at (2.8,2.3)[circle,fill]{};
	\node[inner sep= 1pt,black](c2) at (2.8,2.1)[circle,fill]{};
	%\node[inner sep= 1pt,deepcarmine](c2l) at (3.05,2.05)[]{\small $w_2$};
	\node[inner sep= 1pt](c3) at (2.8,1.9)[circle,fill]{};
	\node[inner sep= 1pt](c4) at (2.8,1.7)[circle,fill]{};
	\draw (v13) -- (c1);
	\draw (v13) -- (c2);
	\draw (v13) -- (c3);
	\draw (v13) -- (c4);
	\draw[rotate=90][dashed] (3,-2.8) ellipse (50pt and 15pt);
	\draw[rotate=90,cadmiumgreen,thick] (2.6,-2.8) ellipse (35pt and 12pt);
	\node[inner sep= 1pt,cadmiumgreen](Al) at (2.4,3.6)[]{\small $B$};
	\draw[rotate=90,darkslateblue,thick] (2.7,-2.8) ellipse (20pt and 10pt);
	\node[inner sep= 1pt,darkslateblue](Aal) at (2.1,2.6)[]{\small $B'$};
	%unit2	
	\node[inner sep= 1pt](v2) at (9,3)[circle,fill]{};
	\node[inner sep= 1pt](v2l) at (9.3,3)[]{\small $v_2$};
	\node[inner sep= 1pt,darkpastelgreen](v21) at (7,4)[circle,fill]{};
	\node[inner sep= 1pt,darkpastelgreen](v22) at (7,3)[circle,fill]{};
	\node[inner sep= 1pt](v23) at (7,2)[circle,fill]{};
	%\node[inner sep= 1pt,darkpastelgreen](r1l) at (7.5,4.1)[]{\small $R_1$};
	%\node[inner sep= 1pt,darkpastelgreen](r2l) at (7.5,3.25)[]{\small $R_2$};
	\draw[decorate, decoration=snake, segment length=6mm,darkpastelgreen] (v2) -- (v21);
	\draw[decorate, decoration=snake, segment length=6mm,darkpastelgreen] (v2) -- (v22);
	\draw[decorate, decoration=snake, segment length=6mm,black] (v2) -- (v23);
	\node[inner sep= 1pt](d1) at (6.2,4.3)[circle,fill]{};
	\node[inner sep= 1pt](d2) at (6.2,4.1)[circle,fill]{};
	\node[inner sep= 1pt](d3) at (6.2,3.9)[circle,fill]{};
	\node[inner sep= 1pt](d4) at (6.2,3.7)[circle,fill]{};
	\draw (v21) -- (d1);
	\draw (v21) -- (d2);
	\draw[darkpastelgreen] (v21) -- (d3);
	\draw (v21) -- (d4);
	\node[inner sep= 1pt](e1) at (6.2,3.3)[circle,fill]{};
	\node[inner sep= 1pt](e2) at (6.2,3.1)[circle,fill]{};
	\node[inner sep= 1pt](e3) at (6.2,2.9)[circle,fill]{};
	\node[inner sep= 1pt](e4) at (6.2,2.7)[circle,fill]{};
	\draw (v22) -- (e1);
	\draw (v22) -- (e2);
	\draw[darkpastelgreen] (v22) -- (e3);
	\draw (v22) -- (e4);
	\node[inner sep= 1pt](f1) at (6.2,2.3)[circle,fill]{};
	\node[inner sep= 1pt](f2) at (6.2,2.1)[circle,fill]{};
	\node[inner sep= 1pt](f3) at (6.2,1.9)[circle,fill]{};
	\node[inner sep= 1pt](f4) at (6.2,1.7)[circle,fill]{};
	\draw (v23) -- (f1);
	\draw (v23) -- (f2);
	\draw (v23) -- (f3);
	\draw (v23) -- (f4);
	\draw[rotate=90][dashed] (3,-6.2) ellipse (50pt and 15pt);
	%paths joining boundaries of units
	\draw[decorate, decoration=snake, segment length=10mm,babyblue] (a4) -- (d2);
	\draw[decorate, decoration=snake, segment length=10mm,red,thick] (b1) -- (d3);
	\draw[decorate, decoration=snake, segment length=10mm,babyblue] (b2) -- (d4);
	\draw[decorate, decoration=snake, segment length=10mm,babyblue] (b3) -- (e1);
	\draw[decorate, decoration=snake, segment length=10mm,red,thick] (b4) -- (e3);
	\draw[decorate, decoration=snake, segment length=10mm,babyblue] (c1) -- (e4);
	\draw[decorate, decoration=snake, segment length=10mm,babyblue] (c2) -- (f1);
	\draw[decorate, decoration=snake, segment length=10mm,babyblue] (c3) -- (f2);
	%\draw[decorate, decoration=snake, segment length=10mm,celestialblue] (c4) -- (f3);
%	\node[inner sep= 1pt,celestialblue](P) at (4.5,4.2)[]{\small $\mathcal{P}$};
	%\node[inner sep= 1pt,red](w) at (3.5,2.6)[circle,fill]{};
	%\node[inner sep= 1pt,red](wl) at (3.7,2.6)[]{\small $w$};
	%\draw[red] (w) -- (b4);
	%\draw[red] (w) -- (c2);
	%\node[inner sep= 1pt,deepcarmine](P1l) at (4.5,3.5)[]{\small $P_1$};
	%\node[inner sep= 1pt,deepcarmine](P2l) at (4,2.25)[]{\small $P_2$};
	\node[inner sep= 1pt,black](bd1l) at (2.8,5)[]{\small $\bd(F_1)$};
	\node[inner sep= 1pt,black](bd2l) at (6.2,5)[]{\small $\bd(F_2)$};
	%unit3
	\node[inner sep= 1pt](v3) at (4.5,-2)[circle,fill]{};
	\node[inner sep= 1pt](v3l) at (4.5,-2.2)[]{\small $v_3$};
	\node[inner sep= 1pt](v31) at (3,-1)[circle,fill]{};
	\node[inner sep= 1pt,darkpastelgreen](v32) at (4.5,-1)[circle,fill]{};
	\node[inner sep= 1pt,black](v33) at (6,-1)[circle,fill]{};
	\draw[decorate, decoration=snake, segment length=6mm,black] (v3) -- (v31);
	\draw[decorate, decoration=snake, segment length=6mm,darkpastelgreen] (v3) -- (v32);
	\draw[decorate, decoration=snake, segment length=6mm,black] (v3) -- (v33);
	\node[inner sep= 1pt](g1) at (6.3,-0.5)[circle,fill]{};
	\node[inner sep= 1pt](g2) at (6.1,-0.5)[circle,fill]{};
	\node[inner sep= 1pt](g3) at (5.9,-0.5)[circle,fill]{};
	\node[inner sep= 1pt](g4) at (5.7,-0.5)[circle,fill]{};
	\draw (v33) -- (g1);
	\draw (v33) -- (g2);
	\draw (v33) -- (g3);
	\draw (v33) -- (g4);
	\node[inner sep= 1pt](h1) at (4.8,-0.5)[circle,fill]{};
	\node[inner sep= 1pt](h2) at (4.6,-0.5)[circle,fill]{};
	\node[inner sep= 1pt](h3) at (4.4,-0.5)[circle,fill]{};
	\node[inner sep= 1pt,darkpastelgreen](h4) at (4.2,-0.5)[circle,fill]{};
	\draw (v32) -- (h1);
	\draw (v32) -- (h2);
	\draw (v32) -- (h3);
	\draw[darkpastelgreen] (v32) -- (h4);
	\node[inner sep= 1pt](i1) at (3.3,-0.5)[circle,fill]{};
	\node[inner sep= 1pt](i2) at (3.1,-0.5)[circle,fill]{};
	\node[inner sep= 1pt](i3) at (2.9,-0.5)[circle,fill]{};
	\node[inner sep= 1pt](i4) at (2.7,-0.5)[circle,fill]{};
	\draw (v31) -- (i1);
	\draw (v31) -- (i2);
	\draw (v31) -- (i3);
	\draw (v31) -- (i4);
	\draw[darktangerine,rotate=45] (3.8,-2) ellipse (25pt and 10pt);
	\draw[dashed,darktangerine] (2.8,2) -- (3.4,0.8);
	\draw[dashed,darktangerine] (2.8,3.4) -- (4.8,1.86);
	\node[inner sep= 1pt,applegreen](u) at (4,1)[circle,fill]{};
	\draw[darkcerulean] (u) -- (b4);
	\draw[decorate, decoration=snake, segment length=10mm,applegreen] (u) -- (h4);
	\node[inner sep= 1pt](v3l) at (3.8,1)[]{\small $u$};
	\node[inner sep= 1pt,darkpastelgreen](Q3l) at (4.8,-1.3)[]{\small $Q_3$};
	\node[inner sep= 1pt,darkpastelgreen](Q4l) at (8,3.8)[]{\small $Q_5$};
	\node[inner sep= 1pt,darkpastelgreen](Q5l) at (8,3.2)[]{\small $Q_4$};
	\node[inner sep= 1pt,darktangerine](Bl) at (4.8,1.3)[]{\small $C$};
	\node[inner sep= 1pt,applegreen](Pl) at (4.3,0.3)[]{\small $P$};
	\node[inner sep= 1pt,red](pw2l) at (4,3.65)[]{\small $P_{w_2}$};
	\node[inner sep= 1pt,red](pw1l) at (4.7,2.95)[]{\small $P_{w_1}$};
	\draw[dashed] (4.5,-0.5) ellipse (60pt and 12pt);
	\node[inner sep= 1pt](bd3l) at (7.1,-0.5)[]{\small $\bd(F_3)$};
	\end{tikzpicture}
	\caption{An illustration of the second case in the proof of Lemma~\ref{lem-router-dense-max}.}\label{fig-build-units-v1-v3}
\end{figure}
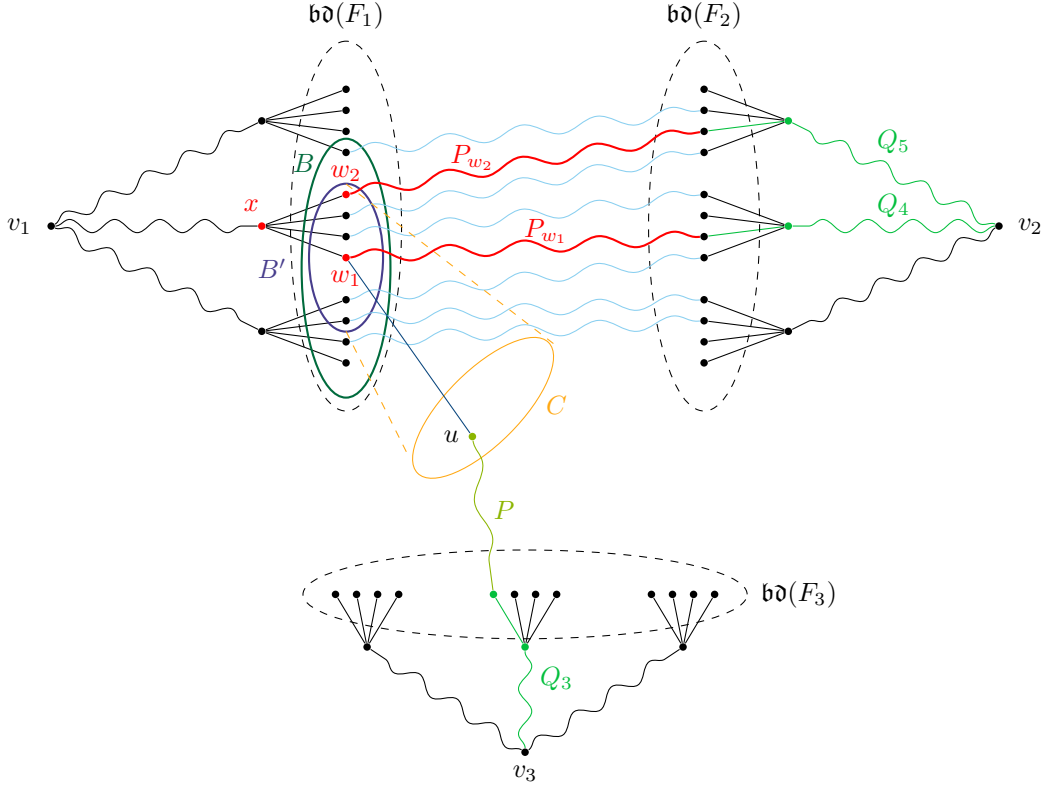

\subsection{Constructing larger adjusters}

\begin{lemma}\label{lem-chain-dense-max}
For each $0<\ep_1,\ep_2<1$,  the following holds for all sufficiently large~$K$.
For $d > 0$, let $G$ be an $n$-vertex bipartite $(\ep_1,\ep_2d)$-expander with $\delta(G)\ge d$ and $n\ge Kd$.
Suppose $d\ge m^{200}$ and $G$ contains no copy of  $\tk_{\sqrt{d}m}^{(1)}$. 
Let $W\subseteq V(G)$ satisfy $|W|\leq dm^{10}$.
Then, there exists an $(100m^3,m^2)$-adjuster $P = (v_1, v_2, F_1, F_2,2, A)$ in $G-W$ such that $80m^3 \leq \ell(P) \leq 80m^3 + 80m$.
\end{lemma}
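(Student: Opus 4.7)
The plan is to build the $(100m^3, m^2)$-adjuster by chaining together many small $(50m, 1)$-adjusters supplied by Lemma~\ref{lem-router-dense-max}. I will iteratively construct pairwise-disjoint $(50m, 1)$-adjusters $P_1, \ldots, P_N$ (with $N$ slightly larger than $m^2$) by repeated applications of Lemma~\ref{lem-router-dense-max}, at each step updating the forbidden set to $W$ together with the perimeters of all previously built $P_j$. Each $P_j$ has perimeter of size $O(\sqrt{d}m^7)$ (dominated by its two units' interiors), so after $N \leq 2m^2$ iterations the accumulated forbidden set has size at most $dm^{10} + O(\sqrt{d}m^9) \leq dm^{11}$, within the budget of Lemma~\ref{lem-router-dense-max}.

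To chain adjacent $P_i, P_{i+1}$, I apply Lemma~\ref{lem-mpath} to find a short path $C_i$ of length $O(\log^3(n/d))$ from $\bd(F_{i,2})$ to $\bd(F_{i+1,1})$, avoiding all previously used vertices; this is feasible since both boundaries have size $\geq dm^{28}$, vastly exceeding the accumulated forbidden set. Prepending one core-to-leaf path in $F_{i,2}$ (of length $<10m$) and appending one in $F_{i+1,1}$ yields a connector from $u_{i,2}$ to $u_{i+1,1}$ of length at most $21m$. I then set $v_1 := u_{1,1}$, $v_2 := u_{N,2}$ and take $F_1, F_2$ to be $F_{1,1}, F_{N,2}$ respectively, each restricted to $2\sqrt{d}m^6$ of its stars to match the required unit size $z=2$. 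In each $P_i$, selecting either the short ($\ell_i \leq 50m$) or long ($\ell_i+2$) internal path and concatenating with the fixed connectors produces a $v_1, v_2$-path. I designate $m^2$ of the $P_i$'s to toggle independently between their two options and fix the remaining $N-m^2$ to their long option; this yields exactly $m^2+1$ paths in $\mathcal{P}$ whose lengths form an arithmetic progression of common difference $2$, as required for a $(\cdot, m^2)$-adjuster.

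To place $\ell(P)$ inside $[80m^3, 80m^3+80m]$, I stop the iteration at the first $N$ for which the ``all-long'' total length first reaches $80m^3$. Each block (one small adjuster plus the subsequent connector) contributes at most $(50m+2)+21m \leq 75m$, so $\ell(P)$ overshoots $80m^3$ by at most $75m \leq 80m$, landing in the required window; moreover, the stopping rule forces $N > 80m^3/75m > m^2$, so enough blocks exist to toggle $m^2$ of them. The perimeter $A$ consists of the $A_{P_i}$'s (each of size at most $101m$, by the extra property of Lemma~\ref{lem-router-dense-max}), the single core-to-leaf traversals through the $2(N-1)$ intermediate units (each of size $\leq 11m$), and the $C_i$'s ($o(m)$ each), giving $|A| \leq 123Nm \leq 180m^3 \leq 200m^3 = 2\ell$, satisfying the definition. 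The main obstacle is the simultaneous control of the length $\ell(P)$, the adjustment parameter $k = m^2$, and the perimeter bound $|A| \leq 2\ell$; the resolution is this ``over-chain, then toggle $m^2$'' technique, which decouples $\ell(P)$ from $k$ while preserving the arithmetic-progression structure of path lengths, and the fact that each block contributes only $O(m)$ both to the length and to $|A|$, so the narrow length window and the perimeter budget are hit comfortably in parallel.
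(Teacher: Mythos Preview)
Your approach is essentially the paper's: chain $(50m,1)$-adjusters from Lemma~\ref{lem-router-dense-max} via short connectors from Lemma~\ref{lem-mpath}, stopping once the accumulated maximal length passes $80m^3$. The paper organises this as an induction, at each step absorbing one new small adjuster into a growing $(80mi,i)$-adjuster $P_i$ while maintaining the invariant $|A_i|\le 2\ell(P_i)$; you instead lay out the pieces and toggle $m^2$ of the blocks at the end. The underlying content is the same.

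There is, however, a gap in your accounting of $|A|$. You assume $N\le 2m^2$ and from it conclude $|A|\le 123Nm\le 180m^3$, but your stopping rule only yields the \emph{lower} bound $N>80m^3/75m>m^2$. Nothing gives you an upper bound on $N$: Lemma~\ref{lem-router-dense-max} provides no lower bound on $\ell(P_i)$, so a block may contribute only $O(1)$ to the running length, and $N$ could in principle be of order $m^3$. Your forbidden-set estimate survives this (since $d\ge m^{200}$, even $N=O(m^3)$ keeps the accumulated perimeters well below $dm^{11}$), but the claimed perimeter bound $|A|\le 123Nm\le 200m^3$ does not. The remedy is to bound $|A|$ against $\ell(P)$ directly rather than via $N$: using the extra property $|A_{P_i}|\le 2\ell(P_i)$ together with the fact that each connector is a path (so its vertex count is at most twice its length), one obtains
\[
|A|\ \le\ 2\sum_i\ell(P_i)+2\sum_i(\text{connector length}_i)\ =\ 2\ell(P)\ \le\ 2(80m^3+80m)\ \le\ 200m^3.
\]
This is exactly what the paper's inductive invariant $|A_i|\le 2\ell(P_i)$ achieves, and it is the reason the paper carries that invariant rather than tracking $N$.
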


\begin{proof}
Inductively on $i=1,2,\dots$
    we will construct, in the graph $G-W$, a $(80mi,i)$-adjuster $P_i= (v^i_1, v^i_2, F^i_1, F^i_2,z_i, A_i, \mathcal{P}_i)$ with the additional properties that $z_i\ge 2$ and $|A_i|\le 2\ell(P_i)$, stopping when its length $\ell(P_i)$ becomes at least~$80m^3$ for the first time.
    
For $i=1$, we apply Lemma~\ref{lem-router-dense-max} in order to get a $(50m, 1)$-adjuster $P_1 = (v^1_1, v^1_2, F^1_1, F^1_2,3, A_1, \mathcal{P}_1)$ in $G-W$, which can be done since $|W|\leq dm^{10}\leq dm^{11}$.
    
    We then proceed as follows. Suppose that we have a $(80m(i-1),i-1)$-adjuster 
    $$P_{i-1} = (v^{i-1}_1, v^{i-1}_2, F_1^{i-1}, F_2^{i-1},z_{i-1}, A_{i-1},  \mathcal{P}_{i-1})
    $$ 
    as above, whose length $\ell(P_i)$ is still less than~$80m^3$. 
    %It will be clear from construction that $z_{i} \in [5/2,3]$. 
    Define $W_{i} := W  \cup p(P_{i-1})$. Then
    $$|W_{i}| = |W \cup A_{i-1}\cup \inte(F_1^{i-1})\cup\inte(F_2^{i-1})| \leq dm^{10}+ 2\cdot 80m(i-1)+ 6\sqrt{d}m^6\cdot 10m + 2 \leq dm^{11},$$
     since $i< \ell(P_i)<  80m^3$.
   We then apply Lemma~\ref{lem-router-dense-max} to get a $(50m, 1)$-adjuster $\widetilde{P}=(w_1,w_2,\widetilde{F}_1,\widetilde{F}_2,3,\widetilde{A})$ in $G-W_i$. Define $W_i' := W_i \cup p(\widetilde{P})$. Clearly, $|W_i'| \leq dm^{11}$ also.
    
    %Let $F_1,F_2$ be the units of the adjuster $P$ with core vertices $w_1,w_2$, respectively and
    %, and let $X$ be the set of vertices in $\bd(F_1)$ such that the paths between $w_1$ and the vertices of $\inte(F_1)$ neighbouring vertices of $X$ are disjoint from $W_i$. 
    %More specifically, since these paths lie in $\inte(F_1)$, they are disjoint from $W$ (by definition of $P$ being an adjuster in $G - W_i$). %, therefore, they are disjoint from $V(P_{i-1})$. 
    %Since there are $\sqrt{d}m^6$ different paths in $\inte(F_1)$, $d \geq m^{200}$ and $|V(P_{i-1})| \leq 200m^3$, we have that $$|X|\ge(\sqrt{d}m^6-|V(P_{i-1})|)\cdot \sqrt{d}m^{21}\ge dm^{15}.$$
    Let $X:=\bd(\widetilde{F}_1)$ and $Y:=\bd(F^{i-1}_2)\setminus p(\widetilde{P})$. % If $N_{G-W_i}(w_1) \cap N_{G-W_i}(v^{i-1}_1) \neq \varnothing$, then linking $w_1$ and $v^{i-1}_1$ via a common neighbour creates a $(100mi, i)$-adjuster $P_i$. 
   Apply Lemma~\ref{lem-mpath} with $(A, B, x)_{\ref{lem-mpath}} = (X, Y, dm^{28})$ in order to find an $X,Y$-path of length at most $m$ in $G - W_i'$. 
   %OP: the following is not necessary:
   %We can assume this $X,Y$-path intersects with $\bd(F_1^{i-1}) \cup \bd(F_2^{i-1})$ in precisely one vertex, swapping indices of units if necessary.
   Thus there exists a $v^{i-1}_2,w_1$-path $Q$ of length at most $10m + m + 10m < 22m$. Consider paths between $v_1^{i-1}$ and $w_2$ obtained by first taking a $v_1^{i-1},v_2^{i-1}$-path from the adjuster $P_{i-1}$, followed by $Q$, followed by a $w_1,w_2$-path from the adjuster~$\widetilde{P}$. Clearly, we can choose $i+1$ of these paths so that their lengths form an arithmetic progression with difference $2$ and are all at most $\ell(P_{i-1})+e(Q)+\ell(\widetilde{P})$. Let $A_i$ be the set of the vertices used by these paths, except for their endpoints $v_1^{i-1}$ and $w_2$. The longest among these paths has length
   \begin{equation}\label{eq-length-of-P_i}
   \ell_i:=\ell(P_{i-1})+e(Q)+\ell(\widetilde{P})\le \ell(P_{i-1})+80m,
   \end{equation}
   where  by induction the right-hand side is at most $(80m(i-1)+2(i-1))+80m\le 80mi+2i$.
   Again by induction, we have
   $$
   |A_i|\le |A_{i-1}|+|Q|+|\widetilde{A}| \le 2\cdot 80m(i-1)+22m+2\cdot 50m\le 2\cdot 80mi.
   $$
   Truncate our units $F_1^{i-1}$ and $\widetilde{F}_2$ in order to ensure that $A_i$ is disjoint from $V(F_1^{i-1}) \cup V(\widetilde{F}_2)$. Since $i < 80m^3$ and $n/d \geq K$, this requires removing on total at most $|A_i| \leq 2\cdot 80mi \leq m^5$ stars\vspace{0.1cm} from each original unit.\vspace{0.1cm}
   It follows  that, for some $z_{i} \in [2,3]$, $P_i:=(v_1^{i-1},w_2,F_1^{i-1}, \widetilde{F}_2^i,z_i ,A_i)$ is a $(80mi, i)$-adjuster with $\ell(P_i)=\ell_i$. Furthermore, by induction and our choice of $\widetilde{P}$, we have\vspace{0.08cm} $|A_i|\le 2\ell(P_{i-1})+|e(Q)|+2\ell(\widetilde{P})\le 2\ell(P_i)$, as claimed.
   %\footnote{OP: When we pass from $P_{m^2}$ to $P$ we add more vertices to $A$ but keep the parameter $100m^3$ unchanged. Add an argument that the gap $100m^3-|A_{m^2}|$ is large enough.}
    
    Thus we can always proceed until we reach an $(80mi,i)$-adjuster $P_i=(v_1^i,v_2^i,F_1^i,F_2^i,z_i,A_i,\mathcal{P}_i)$ such that, in addition to $z_i\ge 2$ and $|A_i|\le 2\ell(P_i)$, we have $\ell(P_i)\ge 80m^3$. By~\eqref{eq-length-of-P_i}  we increase the length at each stage by at least $1$ and at most $80m$, so
    we have that $\ell(P_i)\le 80m^3 + 80m$ and $i\ge 80m^3/80m=m^2$.  Moreover, $|A_i|/2$ does not  exceed $\ell(P_i) \leq 80m^3 + 80m \leq 100m^3$ so $P_i$ is also a $(100m^3,i)$-adjuster. Finally, by taking only the shortest $m^2+1$ paths of $\mathcal{P}_i$ (and trimming the units $F_1^i$ and $F_2^i$ to make $z_i$ exactly 2), we get a $(100m^3,m^2)$-adjuster with all required properties.
\end{proof}

\section{Using the adjusters}\label{sec-link-adj}

\begin{lemma}\label{lem-precise-path}
     For each $0<\ep_1,\ep_2<1$,  the following holds for all sufficiently large~$K$.
     For $d > 0$, let $G$ be an $n$-vertex bipartite $(\ep_1,\ep_2d)$-expander with $\delta(G)\ge d$ and $n\ge Kd$. 
     Suppose $d\ge m^{200}$ and $G$ contains no copy of $\tk_{\sqrt{d}m}^{(1)}$.
     Let $W \subseteq V(G)$ be such that $|W| \leq dm^{9}$. Let $F_1,F_2$ be two $(\sqrt{d}m^6+1,\sqrt{d}m^{22},10m)$-units, with core vertices $v_1,v_2\in V(G)\setminus W$ which lie on the same part of the bipartite graph~$G$.
     Further, suppose that $|(\inte(F_{1})\cup\inte(F_2))\cap W|\le \sqrt{d}m^5$. 
     Then there exists a $v_1, v_2$-path $L$ of length precisely $80m^3$ in $G - W$.% such that $|(\inte(F_{1})\cup\inte(F_2))\cap L|\le m^2$.
\end{lemma}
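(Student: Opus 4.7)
The plan is to construct the required $v_1, v_2$-path as a concatenation of seven pieces
\[
P_1 \, R_1 \, \tilde P_1 \, Q_k \, \tilde P_2 \, R_2 \, P_2,
\]
where $P_i$ is a path in $F_i$ from $v_i$ to a ``good'' leaf $b_i$, $\tilde P_i$ is a path in an auxiliary unit $\tilde F_i$ (furnished by an adjuster we build) from its core $w_i$ to a boundary vertex $\tilde b_i$, $R_i$ is a short $b_i,\tilde b_i$-connector produced by Lemma~\ref{lem-mpath}, and $Q_k$ is one of the length-adjustable $w_1,w_2$-paths in the adjuster. The adjuster will supply $m^2+1$ such $w_1, w_2$-paths whose lengths form an arithmetic progression with common difference $2$ and span $2m^2$, giving enough flexibility to absorb the $O(m)$ contribution of the remaining six pieces and land on exactly $80m^3$.

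To build the pieces, first apply Lemma~\ref{lem-chain-dense-max} with obstacle $W_0 := W \cup \inte(F_1) \cup \inte(F_2)$ (of size at most $dm^9 + 40\sqrt{d}m^7 \le dm^{10}$) to obtain a $(100m^3, m^2)$-adjuster $Q = (w_1, w_2, \tilde F_1, \tilde F_2, 2, A, \mathcal Q)$ inside $G - W_0$ with $\ell(Q) \in [80m^3, 80m^3 + 80m]$, whose paths in $\mathcal Q$ have lengths $\ell', \ell'+2, \ldots, \ell' + 2m^2$ where $\ell' := \ell(Q) - 2m^2$. Next, let $B_i \subseteq \bd(F_i) \setminus W$ consist of those leaves whose unique $v_i$-to-leaf path inside $F_i$ avoids $W$: using $|W \cap \inte(F_i)| \le \sqrt{d}m^5$ (at most $\sqrt{d}m^5$ damaged stars, so at most $dm^{27}$ lost leaves) together with $|W| \le dm^9$, one gets $|B_i| \ge dm^{28}/2$. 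Now apply Lemma~\ref{lem-mpath} twice to obtain short $B_i, \bd(\tilde F_i)$-paths $R_1, R_2$, each of length at most $(100/\ep_1)\log^3(15n/\ep_2 d) \le m$, both avoiding the obstacle $U := W \cup \inte(F_1) \cup \inte(F_2) \cup \inte(\tilde F_1) \cup \inte(\tilde F_2) \cup A$ (of size $O(dm^9)$), with $R_2$ additionally avoiding $V(R_1)$; the source/target sets have size $\Omega(dm^{28})$, which easily dominates the obstacle bound required by Lemma~\ref{lem-mpath}. Denoting the endpoints of $R_i$ by $b_i \in B_i$ and $\tilde b_i \in \bd(\tilde F_i)$, let $P_i$ and $\tilde P_i$ be the unique paths in $F_i, \tilde F_i$ from $v_i, w_i$ to $b_i, \tilde b_i$ respectively (each of length at most $10m$).

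Set $T := e(P_1) + e(R_1) + e(\tilde P_1) + e(\tilde P_2) + e(R_2) + e(P_2) \le 42m$, and take $k := (80m^3 - T - \ell')/2$. For $m$ large we have $80m^3 - T - \ell' \in [\, 2m^2 - 80m - T,\; 2m^2 - T\,] \subseteq (0, 2m^2]$, so $k \in \{0, 1, \ldots, m^2\}$; moreover $k$ is an integer because $G$ is bipartite with $v_1, v_2$ in the same part, so every $v_1, v_2$-walk has even length, which forces $T + \ell' + 2k$ (and hence $T + \ell'$) to be even. Choosing $Q_k \in \mathcal Q$ of length $\ell' + 2k$, the concatenation $P_1 R_1 \tilde P_1 Q_k \tilde P_2 R_2 P_2$ is a $v_1, v_2$-walk of length exactly $T + (\ell' + 2k) = 80m^3$; the obstacle-avoidance at each step guarantees pairwise internal disjointness of the seven pieces (in particular, $A \subseteq U$ prevents any $R_i$ from meeting $V(Q_k)\setminus\{w_1,w_2\}$, and $b_i,\tilde b_i\notin A$ since $R_i$ avoids $A$), so the walk is in fact a path. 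The main obstacle throughout is coordinating the many disjointness conditions while keeping each Lemma~\ref{lem-mpath} invocation within its permissible obstacle size, but the enormous gap between $|U| = O(dm^9)$ and $|B_i|, |\bd(\tilde F_i)| = \Omega(dm^{28})$ provides ample slack, and the $2m^2$ flexibility of the adjuster dwarfs the $O(m)$-length corrections needed to tune the total to $80m^3$.
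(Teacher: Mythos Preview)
Your proof is correct and follows essentially the same approach as the paper: build a $(100m^3,m^2)$-adjuster in $G-(W\cup\inte(F_1)\cup\inte(F_2))$ via Lemma~\ref{lem-chain-dense-max}, connect each $\bd(F_i)$ to the boundary of the corresponding adjuster unit by a short path using Lemma~\ref{lem-mpath}, and then exploit the $2m^2$ length-flexibility of the adjuster (together with bipartite parity) to hit $80m^3$ exactly. The only cosmetic difference is that the paper packages the concatenation as a new $(101m^3,m^2)$-adjuster $P'$ with endpoints $v_1,v_2$ and then reads off the desired path, whereas you compute the index $k$ directly; the underlying argument and the disjointness bookkeeping are the same.
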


\begin{proof}
   Let $W' := W \cup \inte(F_1) \cup \inte(F_2)$. As $|W'| \leq dm^{10}/3$, we can apply Lemma~\ref{lem-chain-dense-max} to find a $(100m^3,m^2)$-adjuster $P=(w_1,w_2,E_1,E_2, 2,A)$ in $G-W'$ such that $80m^3 \leq \ell(P) \leq 80m^3 + 80m$.
   
   Let $X$ be the set of vertices $v$ in $\bd(F_1)$ such that the path between $v_1$ and $v$ is internally vertex-disjoint from $W$. 
   There are $\sqrt{d}m^6$ different paths in $\inte(F_1)$, thus $$|X|\ge(\sqrt{d}m^6-\sqrt{d}m^5)\cdot \sqrt{d}m^{22}\ge dm^{15}.$$
   
    Let $Y:=\bd(E_1)$ and $W'':=W'\cup p(P)$. We have $|W''|\le dm^{10}/2$. Hence, we can apply Lemma~\ref{lem-mpath} with $( A, B, x) = (X, Y, dm^{12})$ in order to find an $X,Y$-path $Q_1$ of length at most $m$ connecting $\bd(F_1)$ and $\bd(E_1)$ in $G - W''$. 
    Let $W''' := W'' \cup V(Q_1)$ and observe that $|W'''| \leq dm^{10}$.
    Then apply Lemma~\ref{lem-mpath} similarly as before to find a path $Q_2$ of length at most $m$ connecting $\bd(F_2)$ and $\bd(E_2)$ in $G - W'''$.
   
   Let $R_1$ be the path from $v_1$ to $Q_1 \cap \bd(F_1)$, $S_1$ be the path from $Q_1 \cap \bd(E_1)$ to $w_1$, $S_2$ be the path from $w_2$ to $Q_2 \cap \bd(E_2)$ and $R_2$ be the path from $Q_2 \cap \bd(F_2)$ to $v_2$, with all these paths taken inside the respective units. 
   Observe that $|R_i| \leq 10m + 1$ and $|S_i|\leq 10m+1$ for $i = 1,2$. Removing the paths $R_1$ and $R_2$, and incident leaves, from $F_1$ and $F_2$, respectively, we see that 
   $$P':=(v_1,v_2,F_1,F_2, R_1 \cup Q_1 \cup S_1 \cup A \cup S_2 \cup Q_2 \cup R_2)$$
   is a $(101m^3,m^2)$-adjuster in $G-W$.
   Furthermore, we have $$80m^3 \leq \ell(P) \le \ell(P')\le \ell(P)+100m\le 80m^3+200m.$$ 
   Since $v_1$ and $v_2$ lie on the same part of $G$, $\ell(P')$ is even. 
   Thus, since $P'$ is a $(101m^3,m^2)$-adjuster and $m^2 \geq 200m$ (as $n \geq Kd$), we can find a $v_1, v_2$-path $L$ of length precisely $80m^3$ in $G - W$.% and by construction $|(\inte(F_{1})\cup\inte(F_2))\cap L|\le 20m + 2\le m^2$.
\end{proof}

We can now prove our main result, Theorem~\ref{main-theorem}.

\begin{proof}\label{proofofmaintheorem}    
Set $t:=\sqrt{d}m$. Using Lemma~\ref{lem-unit} iteratively, we choose $2t$ $(\sqrt{d}m^6+1,\sqrt{d}m^{22},10m)$-units  such that their interiors are pairwise vertex disjoint. This is possible since each interior has at most $\sqrt{d}m^6\cdot 10m$ vertices, which is smaller than $dm^{30}/(2t)$.
    Clearly, we can choose some $t$ of these units $F_1,F_2,\cdots, F_t$ such that their core vertices $v_1,v_2,\dots,v_t$ lie in the same part of the bipartite graph~$G$. 
    
    We have to show that $G$ contains a copy of $\tk_{t}^{(1)}$ or $\tk_{t}^{(\ell)}$, where $\ell := 80m^3$.
    To this end, assume $G$ is $\tk_{t}^{(1)}$-free. Let $\mathcal{P} := \{P_1, \ldots, P_S\}$ be a maximal collection of internally vertex disjoint paths such that:
    
    \begin{itemize}
        \item for each $s \in [S]$, $P_s$ is a $v_i, v_j$-path of length precisely $\ell$ for some distinct $i,j \in [t]$;
        
        \item if $P_s$ is a $v_i, v_j$-path, then for every $k\in[t]\setminus\{i,j\}$, $P_s$ is disjoint from $\inte(F_k)$;%then $|(\inte(F_i)\cup\inte(F_j))\cap P_s|\le m^2$, and
        
        \item for distinct $i,j \in [t]$, there is at most one path in $\mathcal{P}$ with $v_i$ and $v_j$ as end vertices.
    \end{itemize}
    
    If $S = \binom{t}{2}$, then the graph formed by all the paths in $\mathcal{P}$ is our desired copy of $\tk_{t}^{(\ell)}$.
    Hence, we may assume that there exist (distinct) $i,j \in [t]$ such that $\mathcal{P}$ contains no  $v_i, v_j$-path.
    
    Let $W := (\cup_{s \in [S]}V(P_s)\setminus\{v_i, v_j\}) \cup \{\inte(F_q): q\in [t]\setminus\{i, j\}\}$. Then $$|W| \leq t^2\ell + t\cdot\sqrt{d}m^6\cdot 10m \le dm^{9}.$$ Furthermore, 
    \[
    |\inte(F_i)\cap W|=|\inte(F_i)\cap (\cup_{s \in [S]}V(P_s)\setminus\{v_i\})|\le \sqrt{d}m\cdot 80m^3\le \frac{1}{2}\sqrt{d}m^5,
    \] 
    and similarly $|\inte(F_j)\cap W|\leq \frac{1}{2}\sqrt{d}m^5.$ Thus, by Lemma~\ref{lem-precise-path} there exists a $v_i,v_j$-path $P_{S+1}$ of length $\ell$ in $G-W$. %with $|(\inte(F_i)\cup\inte(F_j))\cap P_{S+1}|\le m^2$.
     Also, since $P_{S+1}$ is in $G-W$, this path is disjoint from $\inte(F_k)$ for $k\neq i,j$ and\vspace{0.05cm} internally disjoint from all paths in~$\mathcal P$.
     This contradicts the maximality of $\mathcal{P}$.
    Thus $S = \binom{t}{2}$ and we are done.
\end{proof}

\section{Concluding Remarks}\label{sec-conclude}
While our main result gives the optimal degree bound forcing balanced clique subdivisions, it would be very interesting to consider balanced subdivisions of more general graphs. Let $H$ be a graph with $p$ vertices and $q$ edges. It is known that if $G$ is a graph with average degree at least $88(p + q)$, then $G$ contains a subdivision of $H$. To see this, we need a piece of notation. A graph is \emph{$k$-linked} if, for any choices $x_1,...,x_k,y_1,...,y_k$ of $2k$ distinct vertices there are vertex disjoint paths $P_1,...,P_k$ with $P_i$ joinings $x_i$ to $y_i$, for all $i\in [k]$. We first find a subgraph $G'$ in $G$ which is $22(p+q)$-connected due to a result of Mader~\cite{Mader72averagedegree}. Then $G'$ contains a subgraph $G''$ which is $(p+q)$-linked due to a result of Bollob\'as-Thomason~\cite{boltomhighly}. One can then embed an $H$-subdivision in $G''$ by taking $\{x_i,y_i\}_{i\in [q]}$ to be the pair of endvertices of edges of $H$.

Perhaps the following is true.

\begin{prob}
    Does there exist a constant $C$ such that for any $p$-vertex $q$-edge graph $H$, if $G$ has average degree at least $C(p+q)$ then $G$ contains a balanced subdivision of $H$?
\end{prob}

\paragraph{Note added before submission.}
While preparing this paper, we learnt that Bingyu Ruan, Yantao Tang, Guanghui Wang and Donglei Yang independently proved Theorem~\ref{thm-main}.

\bibliographystyle{plain}
\bibliography{bib-balanced-subdiv}

\end{document}